\documentclass[11pt]{article}
\usepackage{amsmath}
\usepackage{amssymb}
\usepackage{amsthm}
\usepackage{cite}
\usepackage{mathrsfs,geometry,xcolor}
\usepackage{kbordermatrix}
\usepackage{enumerate}
\usepackage{bm}
\usepackage{blkarray}
\usepackage{tikz}

\numberwithin{figure}{section}
\numberwithin{table}{section}
\numberwithin{equation}{section}

\setlength{\topmargin}{-1in} \setlength{\headheight}{1.5cm}
\setlength{\headsep}{0.3cm} \setlength{\textheight}{9in}
\setlength{\oddsidemargin}{0cm} \setlength{\evensidemargin}{0cm}
\setlength{\textwidth}{6.5in}

\makeatletter
\newcommand\xleftrightarrow[2][]{\ext@arrow 0099{\longleftrightarrowfill@}{#1}{#2}}
\def\longleftrightarrowfill@{\arrowfill@\leftarrow\relbar\rightarrow}
\makeatother

\usepackage{tikz}
\usetikzlibrary{arrows,shapes,quotes}
\usetikzlibrary{patterns,decorations.pathreplacing}

\usepackage{authblk}

\numberwithin{table}{section}
\numberwithin{equation}{section}

\theoremstyle{plain}
\newtheorem{theorem}{Theorem}
\newtheorem{proposition}{Proposition}
\newtheorem{corollary}{Corollary}

\theoremstyle{definition}
\newtheorem{definition}{Definition}
\newtheorem{example}{Example}
\newtheorem{remark}{Remark}

\usepackage{authblk}
 \author[1,*]{ \textbf{Noel T. Fortun}}
 \author[1,2,3,4]{\textbf{Eduardo R. Mendoza}}

\affil[1]{\small \textit{Mathematics and Statistics Department, De La Salle University, Manila  0922, Philippines}}
\affil[2]{\textit{Center for Natural Sciences and Environmental Research, De La Salle University, Manila  0922, Philippines}}
\affil[3]{\textit{Max Planck Institute of Biochemistry, Martinsried near Munich, Germany}}
\affil[4]{\textit{Faculty of Physics, Ludwig Maximilian University, Munich 80539, Germany}}
\affil[*]{Corresponding author: \texttt{noel.fortun@dlsu.edu.ph}}

\title{\vspace{3.5cm}\textbf{Absolute concentration robustness in power law kinetic systems}}

\date{}

\begin{document}
\maketitle
\thispagestyle{empty}
\begin{abstract}
Absolute concentration robustness (ACR) is a condition wherein a species in a chemical kinetic system possesses the same value for any positive steady state the network may admit regardless of initial conditions. Thus far, results on ACR center on chemical kinetic systems with deficiency one.  
In this contribution, we use the idea of dynamic equivalence of chemical reaction networks to derive novel results that guarantee ACR for some classes of power law kinetic systems with deficiency zero. Furthermore, using network decomposition, we identify ACR in higher deficiency networks (i.e. deficiency $\geq 2$) by considering the presence of a low deficiency subnetwork with ACR.  Network decomposition also enabled us to recognize and define a weaker form of concentration robustness than ACR, which we named as `balanced concentration robustness'. Finally, we also discuss and emphasize our view of ACR as a primarily kinetic character rather than a condition that arises from structural sources. \\
\linebreak
\textbf{Keywords:} absolute concentration robustness, chemical reaction network, power law kinetic systems, network deficiency, network decomposition, balanced concentration robustness
\end{abstract}
\baselineskip=0.30in

\newpage
\section{Introduction}

A network is said to exhibit robustness if it maintains its function despite changes in environmental or structural conditions \cite{KITANO2004,KITANO2007}. As it is required for homeostasis and adaptive responses to environmental disruptions, robustness becomes fundamental and ubiquitous in many biological processes \cite{SF2010,KITANO2004,BLANCHINI2011,
BARKAI1997}. A class of robust behavior known as ``concentration robustness'' concerns the invariance of some quantity involving the concentrations of the different species in a network for any steady state \cite{DEXTER2015}. 

Of particular interest is the concentration robustness property called \textit{absolute concentration robustness} (ACR), which was first introduced by Shinar and Feinberg in their influential paper published in \textit{Science} \cite{SF2010}. A system possesses this feature if it admits at least one positive steady state and the concentration of a particular species in the system has the same value in every positive steady state set by parameters. The work of Shinar and Feinberg centered on a mathematical theorem that specifies a  large class of mass action systems that are absolute concentration robust. Interestingly, this theorem provides sufficient  conditions that are apparently structural in nature. 

Specifically, they stated their result around a structural index called the \textit{deficiency} (denoted by $\delta$), which measures the amount of  `linear independence' among the reactions of the network \cite{SF2011}. The theorem is stated as follows: \textit{Consider a mass action system that admits a positive steady state. Suppose that (i) the deficiency of the network is one, and (ii) 
there are nonterminal complexes which differ only in the species $X$. Then the system has ACR in species $X$}.  

In our previous work \cite{FLRM2020}, we showed that this result easily extends to kinetic systems more general than mass action systems 
namely, \textit{power law kinetic systems with reactant-determined interactions} (denoted by ``PL-RDK''). For PL-RDK systems, the kinetic order vectors of reactions with the same reactant complexes are identical. The \textit{Shinar-Feinberg Theorem on ACR for PL-RDK systems}  retains the deficiency one condition  but replaces the last criterion by considering the kinetic order differences of the species. Our result specifies that under the same deficiency one assumption, and the criterion that there are nonterminal complexes whose kinetic order of its species differ only in $X$, the PL-RDK system that admits a positive equilibrium exhibits ACR in $X$. 

In this contribution, we explore ACR as a dynamical property that is conserved under \textit{dynamic equivalence}. Two different chemical reaction networks with the same set of kinetics are dynamically equivalent if they generate the same set of ordinary differential equations. Significantly, this approach has led us to derive novel results on ACR  for deficiency zero PL-RDK systems and for a class of power law kinetic systems that are non-PL-RDK (denoted as ``PL-NDK").

In addition to dynamic equivalence, this contribution applies useful techniques in network decomposition to establish ACR. This is particularly relevant in detecting ACR in systems where the underlying chemical reaction networks have higher deficiency (i.e. $\delta \geq 2$). The concept of independent decompositions \cite{FEIN1987} has enabled us to identify ACR in larger networks through the presence of a low deficiency ($\delta \leq 1$) subnetwork with ACR as a ``building block.'' The key result used is a known theorem \cite{FEIN1987} that relates independent decomposition with the set of positive equilibria of a system. 

In an analogous approach, incidence independent decompositions \cite{FML2020} of larger networks with low deficiency subnetwork exhibiting ACR are also investigated. This effort has led us to identify another type of concentration robustness that is weaker than ACR. We call this property as \textit{balanced concentration robustness} (BCR). A system displays BCR in a species $X$ if it has complex balanced steady states and the value of $X$ is the same for any set of complex balanced steady states the system may admit. Using a theorem that relates incidence independent decompositions with the set of complex balanced equilibria of a system \cite{FML2020}, this work generates a new result that guarantees the presence of BCR for larger networks.  

Finally, this work provides a discussion that emphasizes the primarily kinetic property of ACR. This perspective is a shift from our usual view that ACR, as a system property, is induced by ``structural sources."

This paper is outlined as follows. Section 2 reviews and assembles fundamental ideas and results in chemical reaction network theory that are relevant for later sections. Section 3 presents the ACR theorem for deficiency zero PL-RDK systems and for a class of PL-NDK systems. In Section 4, we employ decomposition theory to identify large classes of PLK systems,  including such with higher deficiency, that possess ACR or BCR.  Section 5 discusses our view that ACR is a primarily kinetic property of a chemical kinetic system. Section 6 summarizes our results and outlines perspectives for future work. Lastly, the discussion in Appendix provides the adaptation of the proof presented in \cite{FLRM2020} for  deficiency zero PL-RDK networks.

\section{Fundamentals of chemical reaction networks theory}\label{sec:prelims}
We review notions and results (taken from \cite{AJMSM2015,TAM2018}) that are pertinent in understanding the results in this work. Some fundamental concepts introduced by Feinberg \cite{FEIN1979,FEIN1995} are also reviewed. 

\textbf{Notation:}  We denote the real numbers by $\mathbb{R}$, the non-negative real numbers by $\mathbb{R}_{\geq0}$, and the positive real numbers by $\mathbb{R}_{>0}$.  Objects in the reaction systems are viewed as members of vector spaces. Suppose $\mathscr{I}$ is a finite index set. By $\mathbb{R}^\mathscr{I}$, we mean the usual vector space of real-valued functions with domain $\mathscr{I}$.  For $x \in \mathbb{R}^\mathscr{I}$, the $i^\text{th}$ coordinate of $x$ is denoted by $x_i$, where $i \in \mathscr{I}$. The sets $\mathbb{R}_{\geq 0}^\mathscr{I}$ and $\mathbb{R}_{>0}^\mathscr{I}$ are called the \textit{non-negative} and \textit{positive orthants} of $\mathbb{R}^\mathscr{I}$, respectively. Addition, subtraction, and scalar multiplication in $\mathbb{R}^\mathscr{I}$are defined in the usual way. If $x \in \mathbb{R}_{>0}^\mathscr{I}$ and $y \in \mathbb{R}^\mathscr{I}$, we define $x^y \in \mathbb{R}_{>0}$ by
$
x^y= \prod_{i \in \mathscr{I}} x_i^{y_i} .
$
By the \textit{support} of $x \in \mathbb{R}^\mathscr{I}$, denoted by $\text{supp } x$, we mean the subset of $\mathscr{I}$ assigned with non-zero values by $x$. That is,
$
\text{supp } x := \{ i \in \mathscr{I} | x_i \neq 0 \}.
$ 
Finally, for integers $a$ and $b$, let $\overline{a,b}= \{ j \in \mathbb{Z} | a \leq j \leq b \}$. 

\subsection{Structure of chemical reaction networks}
A chemical reaction network (CRN) is a system of interdependent chemical reactions. Each reaction is represented as an ordered pair of vectors, called complexes, of chemical species. The interdependence of the reactions results in the description of the network as a directed graph (or digraph). 

\begin{definition}
A \textbf{chemical reaction network} (CRN) $\mathscr{N}$ is a triple $(\mathscr{S},\mathscr{C},\mathscr{R})$ of three finite sets:
\begin{enumerate}
\item a set $\mathscr{S}= \{X_1, X_2, \dots, X_m \}$ of \textbf{species};
\item a set $\mathscr{C} \subset \mathbb{R}^\mathscr{S}_{\geq 0}$ of \textbf{complexes};
\item a set $\mathscr{R} = \{R_1, R_2, \dots, R_r \}\subset \mathscr{C} \times \mathscr{C}$ of \textbf{reactions} such that $(y,y) \notin \mathscr{R}$ for any $y \in \mathscr{C}$, and  for each $y \in \mathscr{C}$, there exists $y' \in \mathscr{C}$ such that either $(y,y') \in \mathscr{R}$ or  $(y',y) \in \mathscr{R}$.
\end{enumerate} 
We denote the number of species with $m$, the number of complexes with $n$ and the number of reactions with $r$
\end{definition}

A CRN can be viewed as a digraph $(\mathscr{C},\mathscr{R})$ with vertex-labelling. In particular, it is a digraph where each vertex $y\in \mathscr{C}$ has positive degree and stoichiometry, i.e. there is a finite set $\mathscr{S}$ of species  such that $\mathscr{C}$ is a subset of $\mathbb{R}^{\mathscr{S}}_{\geq 0}$. The vertices are the complexes whose coordinates are in $\mathbb{R}^{\mathscr{S}}_{\geq 0}$, which  are the \textbf{stoichiometric coefficients}. The arcs are precisely the reactions.

We use the convention that an element $R_j = (y_j, y_j') \in \mathscr{R}$ is denoted by $R_j: y_j \rightarrow y_j' $. In this reaction, we say that $y_j$ is the \textbf{reactant} complex and $y'_j$ is the \textbf{product} complex. Connected components of a CRN are called \textbf{linkage classes}, strongly connected components are called \textbf{strong linkage classes}, and strongly connected components without outgoing arcs are called \textbf{terminal strong linkage classes}. We denote the number of linkage classes with $\ell$, that of the strong linkage classes with $s\ell$, and that of terminal strong linkage classes with $t$. A complex is called \textbf{terminal} if it belongs to a terminal strong linkage class; otherwise, the complex is called \textbf{nonterminal}. 

With each reaction $y\rightarrow y'$, we associate a \textbf{reaction vector} obtained by subtracting the reactant complex $y$ from the product complex $y'$. The \textbf{stoichiometric subspace} $S$ of a CRN is the linear subspace of $\mathbb{R}^\mathscr{S}$ defined by
$$S := \text{span }\{y' - y \in \mathbb{R}^\mathscr{S}| y\rightarrow y' \in \mathscr{R}\}.$$
The \textbf{rank} of the CRN, $s$, is defined as $s = \dim S$. 

Many features of CRNs can be examined by working in terms of finite dimensional spaces $\mathbb{R}^\mathscr{S}$ , $\mathbb{R}^\mathscr{C}$ , and $\mathbb{R}^\mathscr{R}$. Suppose the set $\{ \omega_i \in \mathbb{R}^\mathscr{I} \mid i \in \mathscr{I} \}$ forms the \textit{standard basis} for $\mathbb{R}^\mathscr{I}$ where $\mathscr{I}=\mathscr{S,C}$ or $\mathscr{R}$. We recall four maps relevant in the study of CRNs: map of complexes, incidence map, stoichiometric map and Laplacian map. 
\begin{definition}
Let $\mathscr{N}=(\mathscr{S,C,R})$ be a CRN. 
\begin{enumerate}
\item The \textbf{map of complexes} $\displaystyle{Y: \mathbb{R}^\mathscr{C} \rightarrow \mathbb{R}^\mathscr{S}}$ maps the basis vector $\omega_y$ to the complex $ y \in \mathscr{C}$. 
\item The \textbf{incidence map} $\displaystyle{I_a : \mathbb{R}^\mathscr{R} \rightarrow \mathbb{R}^\mathscr{C}}$ is the linear map defined by mapping for each reaction $\displaystyle{R_j: y_j \rightarrow y_j' \in \mathscr{R}}$, the basis vector $\omega_j$ to the vector $\omega_{y_j'}-\omega_{y_j} \in \mathscr{C}$. 
\item The \textbf{stoichiometric map} $\displaystyle{N: \mathbb{R}^\mathscr{R} \rightarrow \mathbb{R}^\mathscr{S}}$ is defined as $N = Y  I_a$. 
\item For each $k \in \mathbb{R}^\mathscr{R}_{>0}$ , the linear transformation  $A_k : \mathbb{R}^\mathscr{C} \rightarrow \mathbb{R}^\mathscr{C}$ called \textbf{Laplacian map} is the mapping defined by 
$$A_k x:= \sum_{y \rightarrow y'\in\mathscr{R}}k_{y \rightarrow y'}x_y (\omega_{y'} -\omega_y),$$
where $x_y$ refers to the $y^\text{th}$ component of $x \in \mathbb{R}^\mathscr{C}$ relative to the standard basis. 
\end{enumerate}
\end{definition}

A non-negative integer, called the deficiency, can be associated to each CRN. The \textbf{deficiency} of a CRN, denoted by $\delta$, is the integer defined by $\delta = n - \ell - s$. This structural index has been the center of many studies in CRNT due to its relevance in the dynamic behavior of the system. 

\subsection{Dynamics of chemical reaction networks}
By \textit{kinetics} of a CRN, we mean the assignment of a rate function to each reaction in the CRN. It is defined formally as follows.
\begin{definition}
A \textbf{kinetics} of a CRN $\mathscr{N}=(\mathscr{S},\mathscr{C},\mathscr{R})$ is an assignment of a rate function $\displaystyle{K_{j}: \Omega_K \to \mathbb{R}_{\geq 0}}$ to each reaction $R_j \in \mathscr{R}$, where $\Omega_K$ is a set such that $\mathbb{R}^{\mathscr{S}}_{>0} \subseteq \Omega_K \subseteq {\mathbb{R}}^{\mathscr{S}}_{\geq 0}$. A kinetics for a network $\mathscr{N}$ is denoted by $$\displaystyle{K=[K_1,K_2,...,K_r]^\top:\Omega_K \to {\mathbb{R}}^{\mathscr{R}}_{\geq 0}}.$$ The pair $(\mathscr{N},K)$ is called the \textbf{chemical kinetic system (CKS)}.
\end{definition}

The above definition is adopted from \cite{WIUF2013}. It is expressed in a more general context than those typically found in CRNT literature. For power law kinetic systems, one sets $\Omega_K =\mathbb{R}^{\mathscr{S}}_{>0}$. Here, we focus on the kind of kinetics relevant to our context: 

\begin{definition}
A \textbf{chemical kinetics} is a kinetics $K$ satisfying the positivity condition: 
$$
\text{For each reaction } R_j: y_j \rightarrow y_j' \in \mathscr{R},  \text{ } K_{j}(c)>0 \text{ if and only if } \text{supp } y_j \subset\text{supp }c.
$$
\end{definition}

\noindent Once a kinetics is associated with a CRN, we can determine the rate at which the concentration of each species evolves at composition $c \in \mathbb{R}^\mathscr{S}_{>0}$. 

\begin{definition}\label{def:SFRF}
The \textbf{species formation rate function}  of a chemical kinetic system is the vector field  
$$f(c) = NK (c) = \displaystyle\sum_{y_j\rightarrow y'_j \in \mathscr{R}}K_j(c) (y_j'- y_j).$$
\noindent The equation $dc/dt=f(c)$ is the \textbf{ODE or dynamical system} of the CKS.  A \textbf{positive equilibrium or steady state} $c^*$ is an element of $\mathbb{R}^\mathscr{S}_{>0}$ for which $f(c^*) = 0$. The set of positive equilibria of a chemical kinetic system is denoted by $\bm{E_+(\mathscr{N}, K)}$. 
\end{definition}

The \textit{complex formation rate function} is the analogue of the species formation rate function for complexes. 
 
\begin{definition}\label{def:CFRF}
The \textbf{complex formation rate function}  $g: \mathbb{R}^\mathscr{S}_{>0} \rightarrow \mathbb{R}^\mathscr{C}$ of a chemical kinetic system is the given by
\begin{equation}\label{eq:CFRF}
g(c) = I_a K (c) = \displaystyle\sum_{y_j\rightarrow y'_j \in \mathscr{R}}K_j(c) (\omega_{y_j'}- \omega_{y_j}).
\end{equation} 
where $I_a$ is the incidence map.
\end{definition}

Horn and Jackson \cite{HORNJACK1972} introduced the notion of \textit{complex balancing} in chemical kinetics, which proved to have profound uses in CRNT.  This is the counterpart of a positive steady state in the complex space, i.e. a concentration $c \in \mathbb{R}^\mathscr{S}_{>0}$ such that $g(c)=0$. It has a natural interpretation: Observe from Equation (\ref{eq:CFRF}) that the function $g$ gives the difference between the production and degradation of each complex. Thus, ``complex balancing'' occurs when $g(c)=0$.  In view of Definitions \ref{def:SFRF} and \ref{def:CFRF}, it is clear that
$$
f(c)=Yg(c).
$$
Hence, if $c \in \mathbb{R}^\mathscr{S}_{>0}$ is complex balanced, then $c$ is a steady state (as the linearity of $Y$ implies $Y(0)=0$). However, the converse does not necessarily hold (i.e., when $\text{Ker } Y$ is nontrivial).

\begin{definition}
A chemical kinetic system $(\mathscr{N},K)$ is called \textbf{complex balanced} if it has a complex balanced steady state. The set of positive complex balanced steady states of the system is denoted by $\bm{Z_+(\mathscr{N},K)}$. 
\end{definition}

We recall the following well-known result that establishes the relationship between weak reversibility and existence of complex balanced equilibria:

\begin{proposition}[Horn, \cite{HORN1972}]
If a chemical kinetic system has a complex balanced equilibrium, then the underlying CRN is weakly reversible. 
\end{proposition}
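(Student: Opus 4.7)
The plan is to argue by contrapositive: assuming a positive complex-balanced equilibrium $c\in\mathbb{R}^{\mathscr{S}}_{>0}$ exists, I will show every linkage class of $\mathscr{N}$ is strongly connected. The starting point is to rewrite the vector equation $g(c)=0$ coordinate by coordinate. Reading off the coefficient of each basis vector $\omega_z$ in (\ref{eq:CFRF}) yields, for every complex $z\in\mathscr{C}$, the scalar balance
\[
\sum_{y\rightarrow z\in\mathscr{R}}K_{y\rightarrow z}(c)\;=\;\sum_{z\rightarrow y'\in\mathscr{R}}K_{z\rightarrow y'}(c),
\]
so inflow equals outflow at $z$. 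Because $K$ is a chemical kinetics and $c$ has full support $\mathscr{S}$, every summand above is strictly positive.

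The decisive step is a summation over a terminal strong linkage class $T$. I would add the per-complex balance equations over all $z\in T$ and split the reactions incident to $T$ into three types: those internal to $T$, those leaving $T$, and those entering $T$ from outside. Internal reactions are counted once as inflow at their head and once as outflow at their tail and therefore cancel; reactions leaving $T$ do not exist at all, since $T$ is terminal. What survives is exactly the total rate of reactions from $\mathscr{C}\setminus T$ into $T$, which must equal zero. Since each such rate is strictly positive, there can in fact be no reaction from outside $T$ into $T$. Combined with terminality, $T$ is disconnected from $\mathscr{C}\setminus T$, so $T$ is itself a full linkage class.

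To close, I would invoke the standard graph-theoretic fact that the condensation of each linkage class $L$ (obtained by collapsing every strong linkage class in $L$ to a point) is a connected directed acyclic graph and so possesses at least one sink; that sink corresponds to a terminal strong linkage class contained in $L$. By the previous paragraph this terminal class must coincide with $L$, so $L$ is strongly connected. As this holds for every linkage class, $\mathscr{N}$ is weakly reversible.

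The main obstacle I anticipate is purely bookkeeping in the summation step: explicitly partitioning the reactions incident to $T$ into internal, leaving, and entering, and verifying the internal pair cancels exactly. The crucial leverage is the pointwise positivity of every $K_{y\rightarrow y'}(c)$, which upgrades the scalar equation ``total entering flux equals zero'' into the structural statement that no such reactions exist at all; everything beyond this is standard condensation-of-a-DAG cleanup.
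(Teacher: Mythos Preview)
The paper does not actually prove this proposition; it is stated as a classical result of Horn with a citation to \cite{HORN1972} and no argument is supplied. So there is no ``paper's own proof'' to compare against.

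That said, your argument is correct and is essentially the standard proof of Horn's theorem. The reduction of $g(c)=0$ to the per-complex balance ``inflow $=$ outflow'', the summation over a terminal strong linkage class $T$ to kill internal and outgoing contributions, and the use of strict positivity of each $K_{y\to y'}(c)$ (guaranteed here because $c\in\mathbb{R}^{\mathscr{S}}_{>0}$ forces $\mathrm{supp}\,y\subset\mathrm{supp}\,c$ for every reactant $y$, so the chemical-kinetics positivity condition applies) to conclude that no reactions enter $T$ --- all of this is sound. The condensation argument showing every linkage class contains, and hence equals, some terminal strong linkage class is likewise standard and correct.

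Two small cosmetic remarks. First, you open by saying you will ``argue by contrapositive'', but what you actually do is a direct argument: you assume a complex balanced equilibrium and deduce weak reversibility. Just drop the word ``contrapositive''. Second, when you conclude that $T$ ``is disconnected from $\mathscr{C}\setminus T$, so $T$ is itself a full linkage class'', it is worth noting explicitly that $T$ is connected (being strongly connected), so it is exactly one linkage class rather than possibly a union of several; you clearly intend this, but spelling it out removes any ambiguity.
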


\subsection{Power law kinetic system}

Power law kinetics is defined by an  $r \times m$ matrix $F=[F_{ij}]$, called the \textbf{kinetic order matrix}, and vector $k \in \mathbb{R}^\mathscr{R}_{>0}$, called the \textbf{rate vector}.  

\begin{definition}
A kinetics $K: \mathbb{R}^\mathscr{S}_{>0} \rightarrow \mathbb{R}^\mathscr{R}$ is a \textbf{power law kinetics} (PLK) if
$$\displaystyle K_{i}(x)=k_i x^{F_{i,\cdot}} \quad \text{for all } i \in \overline{1,r}.$$
with $k_i \in \mathbb{R}_{>0}$ and $F_{ij} \in \mathbb{R}$.  A PLK system has \textbf{reactant-determined kinetics} (of type \textbf{PL-RDK}) if for any two reactions $R_i$, $R_j \in \mathscr{R}$ with identical reactant complexes, the corresponding rows of kinetic orders in $F$ are identical, i.e. $F_{ih}=F_{jh}$ for $h  \in \overline{1,m}$.  On the other hand, a PLK system has \textbf{non-reactant-determined kinetics} (of type \textbf{PL-NDK}) if there exist two reactions with the same reactant complexes whose corresponding rows of kinetic orders in $F$ are not identical. 
\end{definition}

An example of PL-RDK is the well-known \textbf{mass action kinetics} (MAK), where the kinetic order matrix is the transpose of the matrix representation of the map of complexes $Y$ \cite{FEIN1979}. That is, a kinetics is a MAK if
$$
K_{j}(x)=k_{j}x^{Y_{.,j}} \quad \text{for all } R_j: y_j \rightarrow y'_j \in \mathscr{R}
$$
where $k_{j} \in \mathbb{R}_{>0}$, called rate constants. Note that $Y_{.,j}$ pertains to the stoichiometric coefficients of a reactant complex $y_j \in \mathscr{C}$. 

\begin{remark}
In \cite{AJMSM2015}, Arceo et al. discussed several sets of kinetics of a network and drew a ``kinetic landscape". They identified two main sets: the \textbf{complex factorizable} (CF) kinetics and its complement, the \textbf{non-complex factorizable} (NF) kinetics. Complex factorizable kinetics generalize the key structural property of MAK -- that is, the species formation rate function decomposes as 
$$
\dfrac{dx}{dt}= Y \circ A_k \circ \Psi_k,
$$
where $Y$ is the map of complexes, $A_k$ is the Laplacian map, and $\Psi_k: \mathbb{R}^\mathscr{S}_{\geq 0} \rightarrow  \mathbb{R}^\mathscr{C}_{\geq 0}$ such that $I_a \circ K(x) = A_k \circ \Psi_k(x)$ for all $x \in \mathbb{R}^\mathscr{S}_{\geq 0}$. In the set of power law kinetics, the complex-factorizable kinetic systems are precisely the PL-RDK systems. 
\end{remark}

\subsection{Dynamical Equivalence of CRNs}

Two distinct CRNs with the same set of kinetics may give rise to identical set of ordinary differential equations. Such systems are said to be \textbf{dynamically equivalent}. This idea had been tackled as early as 1970s. For instance, Horn and Jackson \cite{HORNJACK1972} studied dynamical equivalence (which they termed as \textit{macro-equivalence)} for a class of weakly reversible MAK systems.  An extensive study of the dynamical equivalence of MAK systems was done by Craciun and Pantea \cite{CP2008}. 

The idea of dynamic equivalence is useful in understanding the qualitative behavior of chemical kinetic systems. If a kinetic system is found to be dynamically equivalent to another system that possesses desirable features about its dynamics (e.g. existence of positive steady state, capacity for multiple steady states, etc.) or network structure (e.g. weak reversibility, low deficiency, etc.), then the dynamical property of the desirable system applies for the system that does not have the desirable features. 

\subsection{Absolute concentration robustness (ACR)}

Formally, ACR is defined as follows.

\begin{definition}\label{def:acr}
A PL-RDK system $(\mathscr{N},K)$ has \textbf{absolute concentration robustness} (ACR) in a species $X \in \mathscr{S}$ if there exists $c^*\in E_+(\mathscr{N},K)$ and for every other $c^{**} \in E_+(\mathscr{N},K)$,  we have $c^{**}_X =c^*_{X}$.
\end{definition}

Shinar and Feinberg \cite{SF2010} established simple sufficient conditions for a MAK system to exhibit ACR. In \cite{FLRM2020}, it was shown that this result can be extended to deficiency one PL-RDK systems.  The extension of the Shinar-Feinberg Theorem on ACR for PL-RDK systems is stated below.

\begin{theorem}[Shinar-Feinberg Theorem on ACR for PL-RDK systems, \cite{FLRM2020}] \label{th:SFTACR}
Let $\mathscr{N}=(\mathscr{S,C,R})$ be a deficiency-one CRN and suppose that $(\mathscr{N},K)$ is a PL-RDK system which admits a positive equilibrium.  If $y, y' \in \mathscr{C}$ are nonterminal complexes whose kinetic order vectors  differ only in species $X$, then the system has ACR in $X$.
\end{theorem}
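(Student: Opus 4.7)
The plan is to port the classical Shinar--Feinberg argument from mass action to the PL-RDK setting by replacing stoichiometric reactant vectors with kinetic order vectors. Because the kinetics is reactant--determined, every reaction with reactant complex $y$ shares a common kinetic order row, which I will write as $\tilde{F}_y \in \mathbb{R}^{\mathscr{S}}$. The rate of a reaction $y \to y'$ at composition $c$ is then $k_{y \to y'} c^{\tilde{F}_y}$, so the species formation rate function has exactly the structure of a mass action system whose ``effective reactant'' at the tail of the arrow $y \to y'$ is $\tilde{F}_y$ rather than $y$. This observation is what lets the original deficiency-one machinery be transplanted.

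First, given two positive equilibria $c^{*}, c^{**} \in E_+(\mathscr{N},K)$, I would introduce the logarithmic ratio $\mu \in \mathbb{R}^{\mathscr{S}}$ defined coordinatewise by $\mu_s = \ln(c^{**}_s / c^{*}_s)$; the goal is to conclude $\mu_X = 0$. The PL-RDK form gives $K_j(c^{**})/K_j(c^{*}) = \exp(\tilde{F}_{y_j} \cdot \mu)$, so that comparing the two equilibrium equations amounts to a linear problem about $\mu$ rather than a nonlinear one about $c^{*}, c^{**}$.

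Next, I would invoke the structural content of the deficiency-one hypothesis, as adapted in the appendix the authors point to. Because $\delta = 1$, the set of equilibria (for the effective mass-action system with reactants $\tilde{F}_y$) is governed by a one-dimensional piece of the kernel of the relevant Laplacian-type map, which forces a very specific compatibility condition between $\mu$ and differences of effective reactant vectors at nonterminal complexes lying in appropriate positions of the network. The precise statement I need is that for any two nonterminal complexes $y, y'$ satisfying the hypothesis (in the same linkage class up to the standard Feinberg reduction), one has
\begin{equation*}
(\tilde{F}_y - \tilde{F}_{y'}) \cdot \mu \;=\; 0.
\end{equation*}
This step is the heart of the deficiency-one analysis and is the main obstacle: it requires the careful Feinberg-style decomposition of the reaction network into linkage classes and terminal strong linkage classes, the handling of the one-dimensional slack that $\delta=1$ provides, and verifying that the transplant from $y_j$ to $\tilde{F}_{y_j}$ goes through without additional hypotheses on the kinetic order matrix. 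I would import this step from the appendix rather than redo it.

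Finally, the hypothesis that $\tilde{F}_y$ and $\tilde{F}_{y'}$ differ only in species $X$ means $\tilde{F}_y - \tilde{F}_{y'} = \alpha\, e_X$ for some nonzero scalar $\alpha$, where $e_X$ is the standard basis vector for $X$. Combined with the displayed identity this yields $\alpha \mu_X = 0$, so $\mu_X = 0$ and therefore $c^{**}_X = c^{*}_X$. Since $c^{**}$ was an arbitrary positive equilibrium, this is exactly ACR in $X$ in the sense of Definition \ref{def:acr}.
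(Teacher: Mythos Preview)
The paper does not actually prove Theorem~\ref{th:SFTACR}; it is quoted from \cite{FLRM2020}. The closest thing here is the Appendix, which adapts the argument of \cite{FLRM2020} to the deficiency-zero setting (Theorem~\ref{th:DZACR}). Your outline is correct and matches that approach: rewrite both equilibrium conditions as membership of $\bm{1}^{\mathscr{C}}$ and $\sum_{y\in\mathscr{C}} e^{\tilde y\cdot\mu}\omega_y$ in $\text{Ker }YA_\kappa$; invoke the STLK basis $\{b^1,\dots,b^t\}$ of $\text{Ker }A_\kappa$ supported on terminal strong linkage classes; use $\delta=1$ to get $\dim\text{Ker }YA_\kappa\le t+1$, so that $\{b^1,\dots,b^t,\bm{1}^{\mathscr{C}}\}$ spans $\text{Ker }YA_\kappa$; then, since the nonterminal complexes $y,y'$ lie outside $\text{supp }b^i$ for every $i$, comparing the $y$- and $y'$-components yields $e^{\tilde y\cdot\mu}=e^{\tilde y'\cdot\mu}$, which is exactly your displayed identity $(\tilde F_y-\tilde F_{y'})\cdot\mu=0$.

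One caveat about your plan to ``import this step from the appendix'': the Appendix handles the deficiency-\emph{zero} case, where weak reversibility makes every complex terminal and forces $\text{Ker }A_\kappa=\text{Ker }YA_\kappa$; that version therefore needs the extra hypothesis that the SF-pair sits in a single linkage class (this is why Theorem~\ref{th:DZACR} is stated as a restricted result). The deficiency-one argument you actually want is not a specialization of the Appendix but its original form from \cite{FLRM2020}: there $y,y'$ being \emph{nonterminal} is precisely what places them outside every $\text{supp }b^i$, with no linkage-class restriction needed. So the import is not literal, but the skeleton you wrote down is the right one.
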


\section{ACR in deficiency zero PL-RDK and minimally PL-NDK systems} 

For convenience, we introduce the following terminology to refer to a pair of reactions whose reactants' kinetic order vectors differ only in one species. 

\begin{definition}
A pair of reactions in a PLK system is called a \textbf{Shinar-Feinberg pair} (or \textbf{SF-pair)} in a species $X$ if their kinetic order vectors differ only in $X$. A subnetwork of the PLK system is of \textbf{SF-type} if  it contains an SF-pair in $X$.
\end{definition}

We present an ACR theorem for deficiency zero PL-RDK systems and for a class of deficiency zero PL-NDK systems. We denote the later as ``minimally PL-NDK" because in terms of their NDK properties, they take minimal values: a single NDK node, two complex factorizable (CF-)subsets and in the special case of binary nodes, a single reaction in each CF-subset. For both PLK systems, the key property for ACR in a species $X$ is the presence of an SF-reaction pair. We use the CF-RM$_+$ method introduced in \cite{NEML2019} to show its dynamic equivalence with an appropriate deficiency one PL-RDK system. We provide examples to illustrate the approach.

\begin{definition}
A PL-NDK system is \textbf{minimally PL-NDK} if it contains a single NDK node which has two complex factorizable subsets (CF-subsets), at least one of which contains only one reaction. Such a node is called a \textbf{minimal NDK node}. If both CF-subsets have only one reaction, we call the node a \textbf{binary NDK node}.
\end{definition}

The \textbf{CF-RM}$\bm{_+}$ method, an algorithm introduced in \cite{NEML2019}, transforms a PL-NDK system to a dynamically equivalent PL-RDK system. The procedure is as follows: at each NDK node, except for a CF-subset with a maximal number of reactions, the reactions in a CF-subset are replaced by adding the same reactant multiple to reactant and product complexes, such that the new reactants and products do not coincide with any existing complexes. Suppose $(\mathscr{N},K)$ is a PL-NDK system that is transformed into a PL-RDK system $(\mathscr{N}^*,K^*)$ via CF-RM$_+$ algorithm. The two key properties of $\mathscr{N}$ and $\mathscr{N}^*$ are the invariance of the stoichiometric subspaces, i.e. $S=S^*$, and the kinetic order matrices, $F=F^*$. Details of the algorithm can be found in \cite{NEML2019}.

\begin{theorem} \label{th:acrdz}
Let $(\mathscr{N},K)$ be a deficiency zero PL-RDK or minimally PL-NDK system with a positive equilibrium. If the system is of SF-type in a species $X$, then it has ACR in $X$.
\end{theorem}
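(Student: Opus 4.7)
The plan is to reduce both cases to the deficiency one Shinar--Feinberg Theorem (Theorem~\ref{th:SFTACR}) by constructing, from $(\mathscr{N}, K)$, a dynamically equivalent deficiency one PL-RDK system $(\mathscr{N}^*, K^*)$ that retains a nonterminal SF-pair in $X$. Once Theorem~\ref{th:SFTACR} is applied to $(\mathscr{N}^*, K^*)$, the equality $E_+(\mathscr{N}, K) = E_+(\mathscr{N}^*, K^*)$ afforded by dynamic equivalence transports ACR in $X$ directly back to $(\mathscr{N}, K)$.

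For the minimally PL-NDK case, I would apply the CF-RM$_+$ algorithm at the single minimal NDK node. Select the CF-subset containing only one reaction, say $R: y \to y'$, and replace it by $R^*: (1+p)y \to y' + py$ for a positive scalar $p$ chosen so that the two augmented complexes are fresh (not already in $\mathscr{C}$). By the defining properties of CF-RM$_+$ recalled in the excerpt, $S^* = S$ (so $f = f^*$) and $F^* = F$; resolution of the only NDK node makes $(\mathscr{N}^*, K^*)$ PL-RDK. The operation introduces two new complexes forming a single new linkage class disjoint from the rest of the network, while $s$ is unchanged, so $\delta^* = (n+2) - (\ell+1) - s = \delta + 1 = 1$. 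The original SF-pair in $X$ passes to a pair consisting of a reaction at the preserved reactant $y$ (from the other CF-subset at the former NDK node) and the new reaction $R^*$ at reactant $(1+p)y$, with kinetic order vectors still differing only in $X$ because $F^* = F$. Theorem~\ref{th:SFTACR} then produces ACR in $X$ for $(\mathscr{N}^*, K^*)$, which transfers to $(\mathscr{N}, K)$.

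For the deficiency zero PL-RDK case, I would follow the Appendix's route of directly adapting the argument from \cite{FLRM2020}: write the steady-state equations at the SF-pair, use that deficiency zero together with existence of a positive equilibrium forces weak reversibility, and then extract from the complex-balancing relation that the logarithmic ratio of any two positive equilibria is orthogonal to the kinetic order difference of the SF-pair. Since by hypothesis this difference is a nonzero multiple of the standard basis vector $e_X$, the $X$-coordinate must be identical at every positive equilibrium, which is exactly ACR in $X$. Alternatively, the CF-RM$_+$-style splitting device can be applied to one reaction of the SF-pair to produce a dynamically equivalent deficiency one PL-RDK system (weak reversibility of $\mathscr{N}$ prevents the removal of a single reaction from disconnecting its linkage class, so the count $n^* = n+2$, $\ell^* = \ell+1$, $s^* = s$ again gives $\delta^* = 1$), after which Theorem~\ref{th:SFTACR} applies.

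The main obstacle is the nonterminality bookkeeping required by Theorem~\ref{th:SFTACR}. The freshly created reactant $(1+p)y$ is automatically nonterminal: its isolated two-complex linkage class is not strongly connected and the complex carries only an outgoing arc. The delicate step is to confirm that the partner reactant $y$ (respectively $y_1$) is also nonterminal in $\mathscr{N}^*$. In the weakly reversible deficiency zero regime every complex of $\mathscr{N}$ is terminal, so one must choose which reaction of the SF-pair to split in a way that either drops $y_1$ out of every terminal strong linkage class of $\mathscr{N}^*$ or, failing that, split both SF-pair reactions and recover the deficiency one target network via the independent decomposition results of Section~4. Managing this interaction between the CF-RM$_+$ splitting, the preservation of the SF-pair, and the terminal/nonterminal status of the relevant reactants is the pivotal combinatorial step of the proof.
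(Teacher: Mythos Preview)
Your overall strategy---apply CF-RM$_+$ to raise the deficiency from $0$ to $1$, land in a dynamically equivalent PL-RDK system, and then invoke Theorem~\ref{th:SFTACR}---is exactly the route the paper takes, for both the minimally PL-NDK and the PL-RDK cases. The paper's argument is in fact terser than yours: it simply records that the kinetic order matrix is unchanged (so SF-type is preserved), that $\delta^*=1$, and that a positive equilibrium persists, and then asserts that the hypotheses of Theorem~\ref{th:SFTACR} hold.

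The nonterminality bookkeeping you isolate as ``the main obstacle'' is a genuine subtlety, and you are right to worry about it: Theorem~\ref{th:SFTACR} requires the two reactant complexes of the SF-pair to be \emph{nonterminal}, whereas in the original weakly reversible deficiency-zero network every complex is terminal. The paper's proof does not address this point---it passes directly from ``$\mathscr{N}^*$ is still of SF-type in $X$'' to ``fulfills the assumptions of Theorem~\ref{th:SFTACR}'' without checking that the relevant reactants are nonterminal in $\mathscr{N}^*$. So your concern is not a defect of your write-up relative to the paper; it is a step that is glossed over there as well. Note also that your implicit assumption that the SF-pair sits at the NDK node is not part of the hypothesis: ``SF-type in $X$'' only guarantees \emph{some} SF-pair, which could lie entirely in an untouched (hence still weakly reversible, hence terminal) linkage class.

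One caution on your alternative for the PL-RDK case: the direct complex-balancing argument you sketch is precisely the Appendix route, and as the paper itself remarks, that argument only goes through when the two SF-pair reactants lie in the \emph{same} linkage class (because $e^{\widetilde{y}\cdot\mu}$ is shown to be constant merely within each linkage class, not across them). So the direct approach does not by itself recover the full statement of Theorem~\ref{th:acrdz}; this is why the paper relies on the CF-RM$_+$ reduction instead.
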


\begin{proof}
Note that after the classical results of Feinberg and Horn, any positive equilibrium in a deficiency zero system is complex balanced \cite{FEIN1972} and hence, the underlying CRN is weakly reversible \cite{HORN1972}. We begin with the PL-NDK case.

Let $y \rightarrow y'$ be single reaction in the hypothesized CF-subset of the minimal NDK node. Applying $\text{CF-RM}_+$ method  to transform $(\mathscr{N},K)$, we obtain as a transform of $\mathscr{N}$ the network $\mathscr{N}^*$ with $\mathscr{S}^*=\mathscr{S}$, $\mathscr{C}^* = \mathscr{C} \cup \{y + ay, y'+ay \}$ where $a$ is an appropriate integral multiple of $y$ and $\mathscr{R}^*=\mathscr{R} \cup \{ y + ay \rightarrow y' + ay \}$. Since we assume that the network has a complex balanced equilibrium, then by a classical result of Horn \cite{HORN1972} , it is weakly reversible, and hence each linkage class is weakly reversible. Since each reaction in the linkage class of the NDK node is in a cycle, the CF-RM$_+$ creates only one additional linkage class, namely $\{ y + ay \rightarrow y' + ay \}$. Hence, the deficiency of $\mathscr{N}^*$ is $\delta^* = (n+2)-(\ell+1)-s=\delta+1=1$. The kinetic order matrix remains the same, so the transform $\mathscr{N}^*$ is still of SF-type in $X$. $\mathscr{N}^*$, as a dynamically equivalent PL-RDK system, has a positive equilibrium and hence, fulfill the assumptions of the extension of the Shinar-Feinberg ACR Theorem for PL-RDK system (Theorem \ref{th:SFTACR}). Hence, $\mathscr{N}^*$ has ACR in $X$. 

In the PL-RDK case, we can apply the CF-RM$_+$ method to any reaction and also obtain an appropriate dynamically equivalent deficiency one system as in the minimally PL-NDK case. 
\end{proof}

\noindent Note that an adaptation of the direct proof in \cite{FLRM2020} to the deficiency zero PL-RDK system is provided in the Appendix. However, the argument yields only a restricted result. 

\begin{corollary}\label{cor:monospecies}
Let $(\mathscr{N},K)$ be a deficiency zero, minimally PL-NDK system with a complex balanced equilibrium. Suppose the reactant of the NDK node is monospecies, i.e. it is of the form $nX$ for some positive integer $n$ and species $X$. Then $(\mathscr{N},K)$ has ACR in $X$.
\end{corollary}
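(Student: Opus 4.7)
The strategy is to invoke Theorem~\ref{th:acrdz} by exhibiting an SF-pair in $X$; the structural hypotheses of deficiency zero, minimally PL-NDK, and a (complex balanced, and therefore positive) equilibrium are already supplied in the statement, so the entire task reduces to verifying SF-type in $X$.

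First I would isolate the unique NDK node, whose reactant is the monospecies complex $nX$. Let $F^{(1)}$ and $F^{(2)}$ denote the kinetic order vectors of its two CF-subsets; these are distinct by the very definition of an NDK node. The key observation is that, because the reactant $nX$ has support $\{X\}$, the rate laws of reactions emanating from $nX$ can depend only on the concentration of the species $X$, and hence both $F^{(1)}$ and $F^{(2)}$ must be supported solely on $\{X\}$. Consequently they can differ only in the $X$-coordinate, and choosing one reaction from each CF-subset of this node produces a pair whose kinetic order vectors differ only in species $X$, i.e.\ an SF-pair in $X$. The system is therefore of SF-type in $X$, and Theorem~\ref{th:acrdz} immediately delivers ACR in $X$.

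The delicate step is justifying that the kinetic orders at a monospecies reactant are themselves supported on the single reactant species. If one interprets this as a standing convention of the paper---customary when writing power-law rate laws at monospecies reactants---the argument closes at once. A fully rigorous derivation would instead extend the PLK rate functions to the nonnegative orthant and invoke the chemical-kinetics positivity condition $K_j(c)>0 \Leftrightarrow \mathrm{supp}\, y_j \subset \mathrm{supp}\, c$ to rule out any nonzero kinetic-order entries in species other than $X$; this is where I anticipate the bulk of the care to be required.
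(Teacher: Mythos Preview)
Your proof is correct and mirrors the paper's own argument almost exactly: both observe that the kinetic order vectors at the monospecies NDK node are supported on $\{X\}$, hence differ only in $X$, and then invoke Theorem~\ref{th:acrdz}. The ``delicate step'' you flag is simply asserted without further justification in the paper's proof, so your treatment already matches (and in its explicit caution arguably exceeds) the paper's level of rigor.
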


\begin{proof}
Since the node is monospecies, the kinetic order vectors of its branching reactions have non-zero values only in $X$. Since it is an NDK, those non-zero values must be different. Hence, a reaction from one CF-subset and one from the other form an SF-pair, and the claim follows from the previous proposition.
\end{proof}

\begin{example}\label{ex:zero-ANPRI}
In \cite{FLRM2020}, it was shown that the conditions of the Shinar-Feinberg Theorem on ACR for PL-RDK systems are satisfied by a PL-RDK system representation for a power law approximation of the pre-industrial carbon cycle model of Anderies et al. \cite{ANDERIES} , and thus it exhibits ACR in a species. Here, we consider its dynamically equivalent deficiency zero PL-RDK system with associated kinetic order matrix $F$:

\begin{equation}
\left.
\arraycolsep=1.4pt\def\arraystretch{2.5}
  \begin{array}{rcl}
A_1 + 2A_2 &\overset{R_1}{\underset{R_2}\rightleftarrows} &2A_1 + A_2 \\
A_2 &\overset{R_3}{\underset{R_4}\rightleftarrows}  & A_3 \\
  \end{array}
 \right.
 \quad \quad
 {\small
F=  
\kbordermatrix{
    & A_1 & A_2 & A_3  \\
    R_1 & p_1 & q_1 & 0   \\
    R_2 & p_2 & q_2 & 0   \\
    R_3 & 0 & 1 & 0   \\
    R_4 & 0 & 0 & 1   \\
},}
\end{equation}
where $p_1=p_2=-68$ and $q_1=0.58$, and $q_2=0.91$. Since $\{R_1 , R_2 \}$ is an SF-pair in $A_2$, it follows from Theorem \ref{th:acrdz}  that there is ACR in $A_2$. 
\end{example}

\begin{example} \label{ex:zero-IDHKP}
The Shinar-Feinberg theorem on ACR for MAK systems \cite{SF2010} provided theoretical support to empirically observed concentration robustness inisocitrate dehydrogenase kinase-phosphatase-isocitrate dehydrogenase (IDHKP-IDH)  glyoxylate bypass control system. Using the technique of network translation of Johnston \cite{JOHNSTON2014}, the MAK system of  IDHKP-IDH  glyoxylate bypass control system has the following dynamically equivalent weakly reversible deficiency zero PL-RDK system.
\begin{equation}
\begin{tikzpicture}[baseline=(current  bounding  box.center)]
\tikzset{vertex/.style = {draw=none,fill=none}}
\tikzset{edge/.style = {->,> = latex', line width=0.15mm}}
\node[vertex] (1) at  (0,0) {$EI_p+I+E$};
\node[vertex] (2) at  (3.5,0) {$EI_pI+E$};
\node[vertex] (3) at  (3.5,-1.5) {$EI_p+I_p+E$};
\node[vertex] (4) at  (0,-1.5) {$2EI_p$};
\scriptsize
\draw [edge]  (1.25,0.2) to["$R_1$"] (2.5,0.2);
\draw [edge]  (2) to["$R_2$"] (1);
\draw [edge]  (2) to["$R_3$"] (3);
\draw [edge]  (2.2,-1.55) to["$R_4$"](0.6,-1.55) ;
\draw [edge]  (0.6,-1.35) to["$R_5$"] (2.2,-1.35);
\draw [edge]  (4) to["$R_6$"] (1);
\end{tikzpicture}
\quad
 {\small
F=  
\kbordermatrix{
    & EI_p & I & EI_pI & I_p & E  \\
    R_1 & 1 & 1 & 0 & 0 & 0   \\
    R_2 & 0 & 0 & 1 & 0 & 0   \\
    R_3 & 0 & 0 & 1 & 0& 0   \\
    R_4 & 0 & 0 & 0 & 1 & 1 \\
    R_5 & 1 & 0 & 0 & 0 & 0 \\
    R_6 & 1 & 0 & 0 & 0 & 0 \\
}}
\end{equation}
$\{ R_1, R_5 \}$ forms an SF-pair in $I$ and hence, it follows from Theorem \ref{th:acrdz} that the system has ACR in species $I$, which agrees with the result in \cite{SF2010}.
\end{example}

\begin{remark}
Under mass action kinetics, any deficiency zero and conservative (i.e. the orthogonal complement of its stoichiometric subspace meets $\mathbb{R}^\mathscr{S}_{>0}$) CRN cannot exhibits absolute concentration robustness \cite{SF2011}. These properties that thwart ACR for MAK systems, however, do not extend to power law kinetics as shown in Examples \ref{ex:zero-ANPRI} and \ref{ex:zero-IDHKP}. These two deficiency zero and conservative PL-RDK systems display ACR.   
\end{remark}

\begin{example}
A deficiency zero subnetwork of Schmitz's pre-industrial carbon cycle model \cite{SCHM2002}  studied in Fortun et al. \cite{FMRL2019} is shown below. Its kinetic order matrix $F$ is also provided.
\begin{equation}\label{fig:Example1}
\begin{tikzpicture}[baseline=(current  bounding  box.center)]
\tikzset{vertex/.style = {draw=none,fill=none}}
\tikzset{edge/.style = {->,> = latex', line width=0.15mm}}
\node[vertex] (1) at  (0,0) {$M_1$};
\node[vertex] (2) at  (1.75,1.5) {$M_2$};
\node[vertex] (3) at  (1.75,-1.5) {$M_3$};
\node[vertex] (4) at  (3.5,0) {$M_4$};
\node[vertex] (5) at  (-1.75,1.5) {$M_5$};
\node[vertex] (6) at  (-1.75,-1.5) {$M_6$};
\scriptsize
\draw [edge]  (5.340) to["$R_1$"] (1.130);
\draw [edge]  (1.155) to["$R_2$"] (5.315);
\draw [edge]  (5) to["$R_3$"] (6);
\draw [edge]  (6) to["$R_4$"] (1);
\draw [edge]  (1.340) to["$R_8$"] (3.130);
\draw [edge]  (4.155) to["$R_6$"] (2.315);
\draw [edge]  (2.225) to["$R_5$"] (1.25);
\draw [edge]  (3.50) to["$R_7$"] (4.200);
\end{tikzpicture}
\quad \quad 
{\footnotesize
F=  
\kbordermatrix{
    & M_1 & M_2 & M_3 & M_4 & M_5 & M_6  \\
    R_1 & 0 & 0 & 0 & 0 & 1 & 0   \\
    R_2 & 0.36 & 0 & 0 & 0 & 0 & 0   \\
    R_3 & 0 & 0 & 0 & 0 & 1 & 0   \\
    R_4 & 0 & 0 & 0 & 0 & 0 & 1   \\
    R_5 & 0 & 9.4 & 0 & 0 & 0 & 0  \\
    R_6 & 0 & 0 & 0 & 1 & 0 & 0  \\
    R_7 & 0 & 0 & 1 & 0 & 0 & 0  \\
    R_8 & 1 & 0 & 0 & 0 & 0 & 0  \\
}.}
\end{equation}
\noindent It is a minimally PL-NDK system with a complex balanced equilibrium. The reaction pair $\{ R_2 , R_8 \}$ form an SF-pair in $M_1$. Since the reactant complex $M_1$ of the single binary NDK node is monospecies, it follows from Corollary \ref{cor:monospecies} that it has ACR in $M_1$.
\end{example}

\begin{example}
Consider the following network with species set $\{X_1, X_2, X_3, X_4 \}$ and power law kinetics given by the kinetic order matrix $F$.

\begin{equation}
\begin{tikzpicture}[baseline=(current  bounding  box.center)]
\tikzset{vertex/.style = {draw=none,fill=none}}
\tikzset{edge/.style = {->,> = latex', line width=0.20mm}}
\node[vertex] (1) at  (0,0) {$X_2+X_3$};
\node[vertex] (2) at  (0,-3.5) {$X_1+X_3$};
\node[vertex] (3) at  (2.2,-1.75) {$X_1+X_2$};
\node[vertex] (4) at  (-2.2,-1.75) {$X_1$};
\scriptsize
\draw [edge]  (4) to["$R_1$"] (2);
\draw [edge]  (2) to["$R_2$"] (3);
\draw [edge]  (1) to["$R_5$"] (3);
\draw [edge]  (4) to["$R_4$"] (1);
\draw [edge]  (3) to["$R_3$"] (4);
\end{tikzpicture}
\quad \quad 
{\small
F=  
\kbordermatrix{
    & X_1 & X_2 & X_3  \\
    R_1 & 1 & 0 & 0   \\
    R_2 & 0.5 & 0 & 0.5   \\
    R_3 & -1 & 0.5 & 0   \\
    R_4 & 0.5 & 0 & 0   \\
    R_5 & 0 & 1 & 1   \\
}.}
\end{equation}
$X_1$ is the only NDK node, and it is binary. The reaction pairs $\{R_1, R_4 \}$ and $\{ R_2, R_4\}$ are SF pairs in $X_1$ and $X_3$, respectively. The stoichiometric matrix $N$ of the network is given by:
$$ 
N=  
\kbordermatrix{
    & R_1 & R_2 & R_3 & R_4 & R_5 \\
    X_1 & 0 & 0 & 0 & -1 & 1  \\
    X_2 & 0 & 1 & -1 & 1 & 0   \\
    X_3 & 1 & -1 & 0 & 1 & -1   \\
}.
$$

\noindent Since the rows of $N$ are linearly independent, $s=3$, and hence there is only one stoichiometric class. The deficiency $\delta = 4-1-3=0$. The ODE system is the following:
\begin{align*}
\frac{dX_1}{dt} & = k_5 X_2 X_3 - k_4 X_1^{0.5} \\
\frac{dX_2}{dt} & = k_2 X_1^{0.5} X_3 ^{0.5} + k_4 X_1^{0.5} - k_3 X_1^{-1} X_2^{0.5} \\
\frac{dX_3}{dt} &= k_1 X_1 + k_4 X_1^{0.5} - k_2 X_1^{0.5} X_3^{0.5} - k_5 X_2 X_3
\end{align*}

\noindent For the rate vector $k = (1,1,2,1,1)$, the system has the steady state $(1,1,1)$. Hence, the system has ACR in $X_1$ and $X_3$. In general, for rate vectors satisfying the equations $k_1 = k_2$  and $k_3 = (k_1+k_4)(\frac{k_5}{k_4})^{0.5}$, the equilibrium is given by $(1,\frac{k_4}{k_5},1)$. 

\end{example}

\section{Decomposition theory and ACR}

In this Section, we use decomposition theory to identify large classes of PLK systems, including such with higher deficiency, i.e. $\delta \geq 2$.  These results suggest that ACR is essentially a ``local'' property of a low deficiency subnetwork, which serves as a ``building block.'' We first review the required concepts and results from decomposition theory and then formulate the new results on ACR.

\subsection{A review of decomposition theory}

We refer to \cite{FML2020} for more details on the concepts and results in decomposition theory.

\begin{definition}
Let $\mathscr{N} =(\mathscr{S}, \mathscr{C}, \mathscr{R})$ be a CRN. A \textbf{covering} of $\mathscr{N}$ is a collection of subsets $\{ \mathscr{R}_1, \mathscr{R}_2,\dots, \mathscr{R}_p \}$ whose union is $\mathscr{R}$. A covering is called a \textbf{decomposition} of $\mathscr{N}$ if the sets $\mathscr{R}_i$ form a partition of $\mathscr{R}$.
\end{definition}

Clearly, each $\mathscr{R}_i$ defines a subnetwork $\mathscr{N}_i$ of $\mathscr{N}$, namely $\mathscr{C}_i$ consisting of all complexes occurring in $\mathscr{R}_i$ and $\mathscr{S}_i$ consisting of all the species occurring in $\mathscr{C}_i$.

\begin{proposition}[Prop. 3., \cite{FML2020}] 
If $\{ \mathscr{R}_1, \mathscr{R}_2,\dots, \mathscr{R}_p \}$ is a network covering, then
\begin{enumerate}[(i)]
\item $S= S_1 + S_2 + \cdots + S_p$;
\item $s \leq s_1 + s_2 + \cdots + s_p$, where $s=\dim S$ and $s_i=\dim S_i$ for $i\in \overline{1,p}$.
\end{enumerate}
\end{proposition}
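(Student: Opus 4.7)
The plan is to observe that both parts are essentially immediate consequences of the definition of the stoichiometric subspace as a span, together with the standard fact that spans distribute over unions and that dimensions are subadditive over sums of subspaces.

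For part (i), I would argue the two inclusions separately. Because each $\mathscr{R}_i \subseteq \mathscr{R}$, every reaction vector $y' - y$ with $y \to y' \in \mathscr{R}_i$ lies in $S$, so $S_i \subseteq S$ for each $i$; hence $S_1 + S_2 + \cdots + S_p \subseteq S$. Conversely, since $\{\mathscr{R}_1, \dots, \mathscr{R}_p\}$ is a covering of $\mathscr{R}$, any reaction $y \to y' \in \mathscr{R}$ lies in at least one $\mathscr{R}_i$, so its reaction vector $y' - y$ lies in $S_i \subseteq S_1 + \cdots + S_p$. Taking spans of both sides of this set containment, we conclude $S = \operatorname{span}\{y' - y : y \to y' \in \mathscr{R}\} \subseteq S_1 + \cdots + S_p$. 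This establishes equality.

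For part (ii), I would invoke the standard dimension inequality for sums of finitely many subspaces of a finite-dimensional ambient space: for any subspaces $V_1, \dots, V_p$ of $\mathbb{R}^{\mathscr{S}}$,
\[
\dim(V_1 + V_2 + \cdots + V_p) \leq \dim V_1 + \dim V_2 + \cdots + \dim V_p,
\]
which follows by induction from the two-subspace identity $\dim(V_1 + V_2) = \dim V_1 + \dim V_2 - \dim(V_1 \cap V_2)$. Applying this with $V_i = S_i$ and combining with the equality from part (i) yields $s = \dim S = \dim(S_1 + \cdots + S_p) \leq s_1 + \cdots + s_p$, as claimed.

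There is no real obstacle here; the content of the proposition is purely linear-algebraic, and the CRN context enters only through the definition of $S$ and $S_i$ as spans of reaction vectors. The only thing to be mindful of is that the statement is for a covering rather than a decomposition, so the $\mathscr{R}_i$ are allowed to overlap; this does not affect either argument, because the first uses only that the union is $\mathscr{R}$ and the second uses only subadditivity, which holds regardless of whether the $S_i$ are independent. In particular, equality in (ii) is not claimed, and indeed need not hold whenever the subspaces $S_i$ have nontrivial pairwise intersections.
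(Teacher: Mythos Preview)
Your proof is correct. The paper does not actually prove this proposition; it merely quotes it from \cite{FML2020} as part of the review of decomposition theory in Section~4.1, so there is no in-paper argument to compare against. Your argument is the standard one and exactly what one would expect: part (i) follows from the two inclusions you wrote (each $S_i \subseteq S$ since $\mathscr{R}_i \subseteq \mathscr{R}$, and conversely every generating reaction vector of $S$ already lies in some $S_i$ because the $\mathscr{R}_i$ cover $\mathscr{R}$), and part (ii) is immediate from subadditivity of dimension applied to the equality in (i). Your remark that the covering hypothesis (as opposed to a partition) is all that is needed is also on point.
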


Feinberg  \cite{FEIN1987} identified the important subclass of independent decomposition:

\begin{definition}
A decomposition is \textbf{independent} if the $S$ is the direct sum of the subnetworks' stoichiometric subspaces $S_i$ or equivalently, if $s = s_1 + s_2 + \cdots + s_p$.
\end{definition}

In \cite{FMRL2019}, Fortun et al. derived a basic property of independent decompositions:

\begin{proposition}
If $\mathscr{N}=\mathscr{N}_1 \cup \mathscr{N}_2 \cup \cdots  \cup \mathscr{N}_p$ is an independent decomposition, then $
\delta \leq\delta_1 +\delta_2 + \cdots +\delta_p$, where $\delta_i$ represents the deficiency of the subnetwork $\mathscr{N}_i$. 
\end{proposition}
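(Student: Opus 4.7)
The plan is to reduce the statement to a linear-algebraic inequality about the image of the incidence map. Unpacking the definitions, the goal is to show
$$n - \ell - s \leq \sum_{i=1}^p (n_i - \ell_i - s_i),$$
and by the independence hypothesis one has $s = s_1 + \cdots + s_p$. Thus it suffices to prove the purely combinatorial inequality
$$n - \ell \leq \sum_{i=1}^p (n_i - \ell_i). \qquad (\star)$$
No property of the decomposition beyond its being a partition of $\mathscr{R}$ is needed to establish $(\star)$; the independence hypothesis is then used only at the very last step to convert it into the deficiency inequality.

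The key tool I would invoke is the standard identity $\dim \operatorname{image}(I_a) = n - \ell$, valid for any CRN. (It follows from the fact that the image of $I_a$ is spanned by $\omega_{y'} - \omega_y$ for all reactions $y \to y'$, and within a single linkage class on $k$ complexes these differences span a $(k-1)$-dimensional subspace; summing over all $\ell$ linkage classes yields $n - \ell$.) Applied to each subnetwork, this gives $\dim \operatorname{image}(I_{a,i}) = n_i - \ell_i$, where I view $I_{a,i}\colon \mathbb{R}^{\mathscr{R}_i} \to \mathbb{R}^{\mathscr{C}_i}$ as landing in $\mathbb{R}^\mathscr{C}$ through the natural inclusion $\mathbb{R}^{\mathscr{C}_i} \hookrightarrow \mathbb{R}^\mathscr{C}$.

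Next I would exploit the partition structure. Because $\{\mathscr{R}_i\}$ is a partition of $\mathscr{R}$, the space $\mathbb{R}^\mathscr{R}$ decomposes as the internal direct sum $\bigoplus_i \mathbb{R}^{\mathscr{R}_i}$, and the restriction of $I_a$ to $\mathbb{R}^{\mathscr{R}_i}$ coincides with $I_{a,i}$. Consequently $\operatorname{image}(I_a) = \sum_i \operatorname{image}(I_{a,i})$ as subspaces of $\mathbb{R}^\mathscr{C}$ — typically not a direct sum, since a complex may appear in several $\mathscr{C}_i$. Subadditivity of dimension for a sum of subspaces then yields $\dim \operatorname{image}(I_a) \leq \sum_i \dim \operatorname{image}(I_{a,i})$, which is exactly $(\star)$. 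Adding $-s = -\sum_i s_i$ to both sides delivers $\delta \leq \delta_1 + \cdots + \delta_p$.

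I do not expect a serious obstacle: each ingredient — the identity $\dim \operatorname{image}(I_a) = n - \ell$, the direct-sum decomposition of $\mathbb{R}^\mathscr{R}$ induced by a partition, and subadditivity of dimension — is standard. The single subtlety worth flagging is that the sum of images is generally \emph{not} direct, so $(\star)$ is often strict; one recovers equality precisely when the subspaces $\operatorname{image}(I_{a,i})$ are in direct sum, which corresponds to no "cancellation" occurring among the complex-level differences across the subnetworks.
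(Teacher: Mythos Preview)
Your proposal is correct and matches the approach implicit in the paper. The paper does not prove this proposition directly (it cites \cite{FMRL2019}), but it records precisely the ingredients you use: Proposition~3 gives $s = \sum_i s_i$ under independence, and Proposition~6 (stated just after) gives $\operatorname{Im} I_a = \sum_i \operatorname{Im} I_{a,i}$ and hence $n-\ell \leq \sum_i (n_i-\ell_i)$, which is your $(\star)$.
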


Feinberg \cite{FEIN1987} established the relationship between the positive equilibria of the ``parent network'' and those of the subnetworks of an independent decomposition:

\begin{theorem}[Feinberg Decomposition Theorem, \cite{FEIN1987}] \label{feinberg theorem}
Let $\{\mathscr{R}_1,\mathscr{R}_2,\dots, \mathscr{R}_p \}$ be a partition of a CRN $\mathscr{N}$ and let $K$ be a kinetics on $\mathscr{N}$. If $\mathscr{N}=\mathscr{N}_1 \cup \mathscr{N}_2 \cup \cdots \cup\mathscr{N}_p$ is the network decomposition generated by the partition  and $E_+(\mathscr{N}_i,K_i)= \{ x \in \mathbb{R}^\mathscr{S}_{>0} | N_i K_i(x) = 0 \}$, then \begin{enumerate}
\item[(i)] $E_+ (\mathscr{N}, K) \subseteq \displaystyle{\bigcap_{i\in \overline{1,p}}} E_+ (\mathscr{N}_i, K_i)$
\item[(ii)] If the network decomposition is independent, then equality holds.
\end{enumerate}
\end{theorem}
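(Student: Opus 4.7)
My approach is to work from the identity
\[
NK(c) \;=\; \sum_{i=1}^{p} N_i K_i(c),
\]
which follows directly from $\mathscr{R}$ being the disjoint union of the $\mathscr{R}_i$ and from the linearity of the stoichiometric map. Each summand $N_i K_i(c)$ lies in the subnetwork stoichiometric subspace $S_i$, and by Proposition 1(i) one already has $S_1 + \cdots + S_p = S$.

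For (i), I would take $c^* \in E_+(\mathscr{N}, K)$, so that $NK(c^*) = 0$, equivalently $\sum_{i=1}^p N_i K_i(c^*) = 0$. To conclude $c^* \in \bigcap_i E_+(\mathscr{N}_i, K_i)$, I need each summand $N_i K_i(c^*)$ to vanish individually. The mechanism that delivers this is the subspace constraint $N_i K_i(c^*) \in S_i$ combined with the way the $S_i$ sit inside $S$: a zero sum whose terms are confined to the respective stoichiometric subspaces must have every term zero once the summands do not interfere with one another.

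For (ii), under independence we have $S = S_1 \oplus \cdots \oplus S_p$, so the separation in (i) becomes the rigorous statement that a zero linear combination across distinct direct summands forces each summand to vanish. Equality then requires only the reverse containment $\bigcap_i E_+(\mathscr{N}_i, K_i) \subseteq E_+(\mathscr{N}, K)$, which is immediate from the same identity: if $N_i K_i(c^*) = 0$ for every $i$, then $NK(c^*) = \sum_i N_i K_i(c^*) = 0$, placing $c^*$ in $E_+(\mathscr{N}, K)$. Combining the two containments yields the claimed equality.

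The principal obstacle is the linear-algebraic separation in (i): converting the single equation $\sum_i N_i K_i(c^*) = 0$ into the $p$ individual equations $N_i K_i(c^*) = 0$ using only that each term sits in its own $S_i$. This is precisely the step where the direct-sum structure of the subnetwork stoichiometric subspaces enters, and it is also what upgrades the containment of (i) to the equality of (ii) under the independence hypothesis. The cleanest write-up will therefore thread the same subspace bookkeeping through both parts of the theorem.
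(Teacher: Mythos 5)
The paper itself offers no proof of Theorem \ref{feinberg theorem}; it is quoted from Feinberg \cite{FEIN1987}. The standard argument is the one you outline: write $NK(c)=\sum_{i=1}^{p}N_iK_i(c)$ with each summand confined to $S_i$, note that one containment is immediate from summing, and obtain the other from the direct-sum decomposition $S=S_1\oplus\cdots\oplus S_p$. Your localization of where independence enters is correct.

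There is, however, a genuine gap in your treatment of part (i). You assert that $\sum_i N_iK_i(c^*)=0$ forces each $N_iK_i(c^*)=0$ ``once the summands do not interfere with one another,'' but part (i) carries no independence hypothesis, and without it $S=S_1+\cdots+S_p$ need not be a direct sum, so that separation is simply not available. A two-reaction example makes this concrete: for $A\rightarrow B$, $B\rightarrow A$ under mass action, decomposed into the two single-reaction subnetworks, $E_+(\mathscr{N},K)$ is a nonempty ray while each $E_+(\mathscr{N}_i,K_i)$ is empty (a single irreversible reaction has no positive equilibrium), so the containment of (i), read literally, fails; here $S_1=S_2$ and the decomposition is not independent. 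What holds unconditionally is the reverse containment $\bigcap_i E_+(\mathscr{N}_i,K_i)\subseteq E_+(\mathscr{N},K)$, exactly as in the complex-balanced analogue, Theorem \ref{th:Z}(i), which the paper does state with the intersection on the small side; the inclusion in part (i) of the present theorem appears to be printed in the reversed direction. Your proof becomes correct once reorganized accordingly: prove $\bigcap_i E_+(\mathscr{N}_i,K_i)\subseteq E_+(\mathscr{N},K)$ unconditionally by summing, and invoke the direct-sum separation only under the independence hypothesis to obtain the opposite containment and hence equality. As written, the unconditional separation you claim in (i) is false, and deferring its justification to part (ii) does not repair part (i).
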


Farinas et al. \cite{FML2020}  introduced the concept of an \textit{incidence independent decomposition}, which naturally complements the independence property. Our starting point is the following basic observation:

\begin{proposition}[Prop. 6, \cite{FML2020}]
If $\{ \mathscr{R}_i \}$ is a network covering, then
\begin{enumerate}[(i)]
\item $\text{\emph{Im} } I_a = \text{\emph{Im }} I_{a,1} + \text{\emph{Im }} I_{a,2} + \cdots + \text{\emph{Im }} I_{a,p}$, where $I_{a,i}$ denotes the incidence map of the subnetwork $\mathscr{N}_i$.
\item $n- \ell \leq (n_1 - \ell_1) + (n_2 - \ell_2) + \cdots + (n_p - \ell_p)$, where $n-\ell=\dim I_a$ and $n_i - \ell_i=\dim I_{a,i}$ for $i\in \overline{1,p}$.
\end{enumerate}
\end{proposition}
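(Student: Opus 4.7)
The plan for part (i) is a short double-inclusion argument directly from the definition of the incidence map. By construction, $\operatorname{Im} I_a$ is spanned by the vectors $\{\omega_{y'}-\omega_y : y\to y'\in\mathscr{R}\}$, and for each subnetwork $\mathscr{N}_i$ the image $\operatorname{Im} I_{a,i}$ is spanned by the same type of vectors restricted to $\mathscr{R}_i$. Since each $\mathscr{R}_i\subseteq \mathscr{R}$, every generator of $\operatorname{Im} I_{a,i}$ is already a generator of $\operatorname{Im} I_a$, which gives $\operatorname{Im} I_{a,1}+\cdots+\operatorname{Im} I_{a,p}\subseteq \operatorname{Im} I_a$. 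Conversely, because $\{\mathscr{R}_i\}$ is a covering, every reaction $y\to y'\in\mathscr{R}$ belongs to at least one $\mathscr{R}_i$, so each generator $\omega_{y'}-\omega_y$ of $\operatorname{Im} I_a$ lies in some $\operatorname{Im} I_{a,i}$, and hence in the sum. Combining the two inclusions yields the claimed equality.

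For part (ii), I would invoke the standard graph-theoretic identity $\dim \operatorname{Im} I_a = n - \ell$, and likewise $\dim \operatorname{Im} I_{a,i} = n_i - \ell_i$ for every $i$. This is the familiar rank formula for the incidence matrix of a digraph (the kernel dimension equals the cycle rank, so the image dimension equals the number of vertices minus the number of connected components, i.e., of linkage classes). With part (i) in hand, the conclusion then reduces to the elementary subadditivity of dimension for sums of subspaces, $\dim(V_1+\cdots+V_p)\le \dim V_1+\cdots+\dim V_p$, applied to $V_i=\operatorname{Im} I_{a,i}$. Substituting the identity for each term gives $n-\ell \le (n_1-\ell_1)+\cdots+(n_p-\ell_p)$.

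The conceptual content is essentially trivial; the only real care required is bookkeeping about the subnetworks. In particular, I would want to state explicitly the convention (already implicit in the paragraph preceding the proposition) that $\mathscr{C}_i$ consists exactly of the complexes appearing as reactants or products in $\mathscr{R}_i$, so that the digraph $(\mathscr{C}_i,\mathscr{R}_i)$ has no isolated vertices and $\ell_i$ genuinely counts its connected components. Once this is pinned down, the identity $\dim\operatorname{Im} I_{a,i}=n_i-\ell_i$ is legitimate for every $i$ without correction terms, and the argument is complete. The main (mild) obstacle is thus simply making sure this indexing convention is stated precisely before invoking the rank formula; after that, both parts follow in a few lines.
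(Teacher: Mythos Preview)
The paper does not supply its own proof of this proposition; it is quoted as a known result from \cite{FML2020} and stated without argument. Your proof is correct and is essentially the only natural one: part (i) is immediate from the observation that the spanning set $\{\omega_{y'}-\omega_y : y\to y'\in\mathscr{R}\}$ of $\operatorname{Im} I_a$ is the union over $i$ of the spanning sets of the $\operatorname{Im} I_{a,i}$, and part (ii) then follows from dimension subadditivity combined with the standard rank identity $\dim\operatorname{Im} I_a = n-\ell$ for the incidence map of a digraph.

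One small technicality you might make explicit: as written, $I_{a,i}$ has codomain $\mathbb{R}^{\mathscr{C}_i}$ rather than $\mathbb{R}^{\mathscr{C}}$, so the sum in (i) is implicitly taken after identifying each $\mathbb{R}^{\mathscr{C}_i}$ with the coordinate subspace of $\mathbb{R}^{\mathscr{C}}$ supported on $\mathscr{C}_i$. This identification is canonical and causes no trouble, but since you already flag the bookkeeping about $\mathscr{C}_i$ as the only delicate point, it would be in keeping with your own standard of care to mention it.
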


The analogous concept to independent decomposition is the following:

\begin{definition}
A decomposition $\{ \mathscr{N}_1, \mathscr{N}_2, \dots, \mathscr{N}_p \}$ of a CRN is \textbf{incidence independent} if and only if the image of the incidence map $I_a$ of $\mathscr{N}$ is the direct sum of the images of the incidence maps of the subnetworks.
\end{definition}

It follows from this definition that the dimension of the image of the incidence map $I_a$ equals the sum of the dimensions of the subnetworks' incidence maps. That is, $n-\ell = \sum (n_i - \ell_i)$. 

\begin{example}
The linkage classes form the primary example of an incidence independent decomposition, since $n = \sum n_i$ and $\ell = \sum \ell_i$. In fact, the linkage class decompositions belong to the important subclass of $\mathscr{C}$-decompositions discussed Definition \ref{def:Cdecomp}.
\end{example} 
The following result is the analogue of the result of Fortun et al. \cite{FMRL2019} for incidence independent decomposition.

\begin{proposition}[Prop. 7, \cite{FML2020}]
\label{prop:incidenceindep}
Let $\mathscr{N}=\mathscr{N}_1 \cup \mathscr{N}_2 \cup \cdots \cup \mathscr{N}_p$ be an incidence independent decomposition. Then $\delta \geq \delta_1 +\delta_2 + \cdots + \delta_p$.
\end{proposition}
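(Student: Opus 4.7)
The plan is to obtain the inequality by combining two facts about decompositions: an exact identity for the quantity $n-\ell$ coming from incidence independence, and an inequality for $s$ coming from the fact that any decomposition is in particular a covering.

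First I would write the deficiency of the parent network as $\delta = n - \ell - s$, and similarly $\delta_i = n_i - \ell_i - s_i$ for each subnetwork $\mathscr{N}_i$. The goal is then to compare $n-\ell-s$ with $\sum_i (n_i - \ell_i - s_i)$.

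Next I would invoke incidence independence. By the definition given just above the statement, incidence independence means that $\operatorname{Im} I_a$ is the direct sum of the $\operatorname{Im} I_{a,i}$, so the dimensions add exactly, giving the equality
\begin{equation*}
n - \ell \;=\; \sum_{i=1}^{p} (n_i - \ell_i).
\end{equation*}
For the stoichiometric subspaces, I would apply Prop. 3 (part (ii)) of the excerpt, which says that for any covering (and hence for our decomposition) one has $s \leq s_1 + s_2 + \cdots + s_p$.

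Combining these two,
\begin{equation*}
\delta \;=\; (n-\ell) - s \;=\; \sum_{i=1}^{p}(n_i - \ell_i) - s \;\geq\; \sum_{i=1}^{p}(n_i - \ell_i) - \sum_{i=1}^{p} s_i \;=\; \sum_{i=1}^{p} \delta_i,
\end{equation*}
which is the desired inequality. There is no real obstacle here: the argument is a one-line bookkeeping computation that uses the exact additivity of $n-\ell$ (from incidence independence) together with the subadditivity of $s$ (which holds for any covering, independent or not). The only thing worth flagging is the direction of the inequality compared to the independent-decomposition case: incidence independence pins down $n-\ell$ exactly while leaving $s$ only sub-additive, which is what flips the inequality from $\delta \leq \sum \delta_i$ (independent case) to $\delta \geq \sum \delta_i$ here.
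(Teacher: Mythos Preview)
Your argument is correct and is exactly the natural proof: incidence independence gives the equality $n-\ell=\sum_i(n_i-\ell_i)$, while Prop.~3(ii) gives $s\le\sum_i s_i$ for any covering, and subtracting yields the claim. The paper does not supply its own proof of this proposition---it is quoted from \cite{FML2020}---but your reasoning is precisely the standard one, dual to the argument for the independent case (Prop.~4 in the paper), so there is nothing to add.
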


\begin{definition}
A decomposition is \textbf{bi-independent} if it is both independent and incidence independent.
\end{definition}
Independent linkage class decomposition is the best known example of bi-independent decomposition. 

\begin{proposition}[Prop. 9, \cite{FML2020}]
A decomposition $\mathscr{N}= \mathscr{N}_1 \cup \mathscr{N}_2 \cup \cdots \cup \mathscr{N}_p$ is independent or incidence independent and $\displaystyle{\sum_{i =1}^p} \delta_i = \delta$ if and only if $\mathscr{N}= \mathscr{N}_1 \cup \mathscr{N}_2 \cup \cdots \cup \mathscr{N}_p$ is bi-independent.
\end{proposition}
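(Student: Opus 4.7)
The plan is to reduce the statement to a single algebraic identity coming from the deficiency formula. Writing $\delta_i = n_i - \ell_i - s_i$ for each subnetwork and summing, and using $\delta = n - \ell - s$, I rearrange to get
$$\delta - \sum_{i=1}^p \delta_i \;=\; \Bigl(\sum_{i=1}^p s_i - s\Bigr) - \Bigl(\sum_{i=1}^p (n_i - \ell_i) - (n - \ell)\Bigr).$$
Denote the first parenthesized quantity by $B$ and the second by $A$. By Proposition 3(ii) in the excerpt, $B \geq 0$, with equality exactly when the decomposition is independent; by Proposition 6(ii), $A \geq 0$, with equality exactly when the decomposition is incidence independent. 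So the identity can be written compactly as $\delta - \sum \delta_i = B - A$ with $A,B \geq 0$, and each of $A = 0$ and $B = 0$ is a named property of the decomposition.

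With this in hand, both directions of the iff become a short case analysis. For the $(\Leftarrow)$ direction, bi-independence gives $A = B = 0$, whence $\delta = \sum \delta_i$, and of course the decomposition is in particular independent, so either disjunct of the hypothesis is satisfied. For the $(\Rightarrow)$ direction, the assumption $\sum \delta_i = \delta$ is precisely the equation $B = A$. If the decomposition is independent, then $B = 0$ and hence $A = 0$, so incidence independence also holds; if instead it is incidence independent, then $A = 0$ forces $B = 0$, yielding independence. Either branch concludes bi-independence.

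I do not anticipate a real obstacle here, since the argument is essentially bookkeeping on top of the two dimension inequalities already proved in the excerpt. The only thing that needs care is the sign tracking: $s$ and $n - \ell$ enter $\delta$ with opposite signs, which is precisely what makes it possible to toggle independence and incidence independence against each other once $\sum \delta_i = \delta$ is assumed, and one must invoke Propositions 3(ii) and 6(ii) in the correct direction for their respective equality clauses. No additional input beyond these two propositions and the definition of $\delta$ is required.
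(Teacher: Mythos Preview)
Your argument is correct. The identity $\delta - \sum_i \delta_i = B - A$ with $B = \sum_i s_i - s \geq 0$ and $A = \sum_i (n_i - \ell_i) - (n-\ell) \geq 0$ is exactly the right bookkeeping, and the case analysis you give for each direction is sound.

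Note, however, that the paper does not actually supply a proof of this proposition: it is quoted from \cite{FML2020} (as ``Prop.~9'') and stated without argument. So there is no ``paper's own proof'' to compare against here. Your write-up is the natural proof one would expect and matches the spirit of the surrounding results (Propositions~3(ii) and~6(ii), and the deficiency inequalities for independent and incidence independent decompositions), which are precisely the ingredients you invoke.
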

\noindent Note that for a deficiency zero network, an independent decomposition is incidence independent and therefore, bi-independent. 

$\mathscr{C}$-decompositions form an important class of incidence independent decompositions:

\begin{definition}\label{def:Cdecomp}
A decomposition $\mathscr{N} = \mathscr{N}_1 \cup \mathscr{N}_2 \cup \cdots \cup \mathscr{N}_p$ with $\mathscr{N}_i =(\mathscr{S}_i, \mathscr{C}_i, \mathscr{R}_i)$ is a \textbf{$\mathscr{C}$-decomposition} if $\mathscr{C}_i \cap \mathscr{C}_j = \emptyset$ for $i \neq j$.
\end{definition}

A $\mathscr{C}$-decomposition partitions not only the set of reactions but also the set of complexes. The primary examples of  $\mathscr{C}$-decomposition are the linkage classes. Linkage classes, in fact, essentially determine the structure of a $\mathscr{C}$-decomposition. 

\begin{theorem}[Structure Theorem for $\mathscr{C}$-decomposition, \cite{FML2020}]
Let $\mathscr{L}_1, \mathscr{L}_2, \dots, \mathscr{L}_\ell$ be the linkage classes of a network $\mathscr{N}$. A decomposition $\mathscr{N}=\mathscr{N}_1 \cup \mathscr{N} \cup \cdots \cup \mathscr{N}_p$ is a $\mathscr{C}$-decomposition if and only if each $\mathscr{N}_i$ is the union of linkage classes and each linkage class is contained in only one $\mathscr{N}_i$. In other words, the linkage class decomposition is a refinement of $\mathscr{N}$.
\end{theorem}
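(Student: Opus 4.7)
The plan is to prove both directions of the biconditional by exploiting the fact that the linkage classes $\mathscr{L}_1, \mathscr{L}_2, \dots, \mathscr{L}_\ell$ are by definition the connected components of the underlying graph of $(\mathscr{C}, \mathscr{R})$, and hence partition $\mathscr{C}$.

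For the forward direction, I would assume that $\mathscr{N} = \mathscr{N}_1 \cup \cdots \cup \mathscr{N}_p$ is a $\mathscr{C}$-decomposition, so that $\mathscr{C}_i \cap \mathscr{C}_j = \emptyset$ for $i \neq j$, and show that each linkage class $\mathscr{L}_k$ is contained in a unique $\mathscr{N}_i$. Fix any complex $y \in \mathscr{L}_k$; by the positive-degree axiom in the CRN definition, $y$ appears in at least one reaction, which lies in a unique $\mathscr{R}_i$, so $y \in \mathscr{C}_i$. For any other complex $y' \in \mathscr{L}_k$, connectedness yields an undirected path $y = w_0, w_1, \ldots, w_t = y'$ whose consecutive complexes are joined by reactions, each reaction lying in a unique $\mathscr{R}_{\sigma(s)}$ with both endpoints in $\mathscr{C}_{\sigma(s)}$. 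The $\mathscr{C}$-disjointness then propagates $\sigma(s) = i$ along the path inductively, since each $w_s$ has already been shown to lie in $\mathscr{C}_i$ at the previous step and no complex can lie in two different $\mathscr{C}_j$. Hence $y' \in \mathscr{C}_i$; applying the same disjointness observation to the endpoints of any reaction within $\mathscr{L}_k$ shows that every such reaction is also in $\mathscr{R}_i$. Thus $\mathscr{L}_k \subseteq \mathscr{N}_i$, and by collecting linkage classes this way, each $\mathscr{N}_i$ is a union of linkage classes with each linkage class contained in only one $\mathscr{N}_i$.

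For the reverse direction, I would assume that each $\mathscr{N}_i$ is the union of some subfamily of linkage classes, with each linkage class belonging to exactly one $\mathscr{N}_i$. Then $\mathscr{C}_i$ is the union of the complex sets of those linkage classes. Since distinct linkage classes have disjoint complex sets and the subfamilies do not share any linkage class, the collection $\{\mathscr{C}_1, \ldots, \mathscr{C}_p\}$ is pairwise disjoint, which is precisely the defining property of a $\mathscr{C}$-decomposition.

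The only point that demands care is the inductive walk in the forward direction: one must note that two reactions assigned to distinct $\mathscr{R}_j$ cannot share an endpoint complex (otherwise that complex would lie in two different $\mathscr{C}_j$, violating disjointness), so the label $\sigma(s)$ cannot change along a connecting path. Beyond this observation, the proof is essentially bookkeeping built on the partition of $\mathscr{C}$ by linkage classes.
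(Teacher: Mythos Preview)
Your argument is correct. Note, however, that the paper itself does not supply a proof of this theorem: it is quoted from \cite{FML2020} as part of the review of decomposition theory in Section~4.1, so there is no in-paper proof to compare against.

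That said, your proof is the natural one and almost certainly coincides with the argument in the original reference. The forward direction is the only part with any content, and you have handled it correctly: the key observation is that in a $\mathscr{C}$-decomposition two reactions lying in different $\mathscr{R}_j$ cannot share an endpoint complex, so the block index is constant along any undirected path in the reaction graph, forcing each connected component (linkage class) into a single $\mathscr{N}_i$. The reverse direction is immediate from the fact that linkage classes already partition $\mathscr{C}$. One small stylistic point: in the forward direction you might state explicitly that the $\mathscr{C}_i$ cover $\mathscr{C}$ (which follows because the $\mathscr{R}_i$ partition $\mathscr{R}$ and every complex has positive degree), so that the initial choice of $i$ with $y\in\mathscr{C}_i$ is guaranteed; you allude to this via the positive-degree axiom, which is exactly right.
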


The following Theorem from \cite{FML2020} shows the relationship between the set of incidence independent decompositions and the set of complex balanced equilibria of any kinetic system.  It is the precise analogue of Feinberg's work (Theorem \ref{feinberg theorem}). 

\begin{theorem}[Theorem 4, \cite{FML2020}]
\label{th:Z}
Let $\mathscr{N}=(\mathscr{S}, \mathscr{C}, \mathscr{R})$ be a CRN and $\mathscr{N}_i =(\mathscr{S}_i, \mathscr{C}_i, \mathscr{R}_i)$ for $i\in \overline{1,p}$ be the subnetworks of a decomposition. Let $K$ be any kinetics, and $Z_+ (\mathscr{N},K)$ and $Z_+ (\mathscr{N}_i, K_i)$  be the set of  complex balanced equilibria of $\mathscr{N}$ and $\mathscr{N}_i$, respectively. Then
\begin{enumerate} 
\item[(i)] $\displaystyle{\bigcap_{i\in \overline{1,p}}} Z_+ (\mathscr{N}_i, K_i) \subseteq Z_+ (\mathscr{N}, K)$
\end{enumerate}
 If the decomposition is incidence independent, then 
 \begin{enumerate}
\item[(ii)] $Z_+ (\mathscr{N}, K)= \displaystyle{\bigcap_{i\in \overline{1,p}}} Z_+ (\mathscr{N}_i, K_i)$
\item[(iii)] $ Z_+ (\mathscr{N}, K) \neq \emptyset$ implies $Z_+ (\mathscr{N}_i, K_i) \neq \emptyset$ for each $i\in \overline{1,p}$.
\end{enumerate}

\end{theorem}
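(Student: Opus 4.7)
The plan is to reduce all three statements to one algebraic identity: for any decomposition (partition) of $\mathscr{R}$ into $\mathscr{R}_1,\ldots,\mathscr{R}_p$, the incidence map $I_a$ of $\mathscr{N}$ satisfies
\begin{equation*}
I_a K(c) \;=\; \sum_{i=1}^{p} I_{a,i} K_i(c),
\end{equation*}
where on the right each $I_{a,i}K_i(c) \in \mathbb{R}^{\mathscr{C}_i}$ is regarded as a vector in $\mathbb{R}^\mathscr{C}$ via the natural inclusion $\mathbb{R}^{\mathscr{C}_i}\hookrightarrow \mathbb{R}^\mathscr{C}$. This identity is immediate from Definition \ref{def:CFRF}, since the partition of $\mathscr{R}$ lets us split the single defining sum $g(c)=\sum_{y\to y'}K_{y\to y'}(c)(\omega_{y'}-\omega_y)$ into subsums indexed by $\mathscr{R}_i$.

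With this identity in hand, part (i) is almost a one-liner: if $c$ lies in every $Z_+(\mathscr{N}_i,K_i)$, then each summand $I_{a,i}K_i(c)$ vanishes, so $I_a K(c)=0$ and $c\in Z_+(\mathscr{N},K)$. Note that this step does not use incidence independence, matching the hypotheses of statement (i).

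For (ii), the nontrivial inclusion is $Z_+(\mathscr{N},K)\subseteq \bigcap_i Z_+(\mathscr{N}_i,K_i)$. Suppose $c\in Z_+(\mathscr{N},K)$; then $\sum_i I_{a,i}K_i(c)=0$ with each term lying in $\mathrm{Im}\,I_{a,i}$. Incidence independence asserts exactly that $\mathrm{Im}\,I_a=\bigoplus_i \mathrm{Im}\,I_{a,i}$, so the zero decomposition is unique, forcing $I_{a,i}K_i(c)=0$ for every $i$. This is the one place in the argument where the structural hypothesis is essential, and it is the step I expect to be the main (though still mild) obstacle: one must be careful that the inclusions $\mathbb{R}^{\mathscr{C}_i}\hookrightarrow \mathbb{R}^\mathscr{C}$ are compatible with how the direct-sum condition is phrased, since different $\mathscr{C}_i$ may share complexes in a general (non-$\mathscr{C}$) decomposition, and the independence is formulated at the level of the \emph{images} in $\mathbb{R}^\mathscr{C}$ rather than inside the $\mathbb{R}^{\mathscr{C}_i}$ separately.

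Finally, (iii) is an immediate consequence of (ii): any $c\in Z_+(\mathscr{N},K)$ is, by (ii), a common point of all the $Z_+(\mathscr{N}_i,K_i)$, so none of these sets is empty. Altogether the argument is essentially a direct-sum bookkeeping exercise on the image of the incidence map, mirroring Feinberg's proof of Theorem \ref{feinberg theorem} with $\mathrm{Im}\,I_a$ playing the role that the stoichiometric subspace $S$ plays there.
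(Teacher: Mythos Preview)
The paper does not prove this theorem; it is quoted verbatim from \cite{FML2020} and used as a black box in Section~4.3, so there is no in-paper proof to compare against. That said, your argument is correct and is the natural one: the partition of $\mathscr{R}$ gives the additive identity $I_aK(c)=\sum_i I_{a,i}K_i(c)$ in $\mathbb{R}^\mathscr{C}$, from which (i) is immediate and (ii)--(iii) follow because incidence independence is, by definition, the direct-sum condition $\mathrm{Im}\,I_a=\bigoplus_i \mathrm{Im}\,I_{a,i}$ inside $\mathbb{R}^\mathscr{C}$. Your caution about the embeddings $\mathbb{R}^{\mathscr{C}_i}\hookrightarrow\mathbb{R}^\mathscr{C}$ is well placed but already handled by the paper's definition of incidence independence, which is phrased intrinsically in $\mathbb{R}^\mathscr{C}$; this is exactly the analogue of Feinberg's Theorem~\ref{feinberg theorem} with $\mathrm{Im}\,I_a$ replacing $S$, as you note.
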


\subsection{ACR in PLK systems with a positive equilibrium}

We can now demonstrate ACR in classes of PL-NDK and higher deficiency PLK systems.

\begin{proposition}
Let $(\mathscr{N},K)$ be a PLK system with a positive equilibrium and an independent decomposition $\mathscr{N}= \mathscr{N}_1 \cup \mathscr{N}_2 \cup \cdots \cup \mathscr{N}_p$. If there is an $\mathscr{N}_i$ with $(\mathscr{N}_i,K_i)$ of SF-type in $X \in \mathscr{S}$ such that
\begin{enumerate}[(i)]
\item $\delta =0$ and is PL-RDK or minimally PL-NDK, or 
\item $\delta =1$ and is PL-RDK 
\end{enumerate}
Then $(\mathscr{N},K)$ has ACR in $X$.
\end{proposition}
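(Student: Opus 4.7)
The plan is to reduce ACR of the whole system to ACR of the distinguished subnetwork $\mathscr{N}_i$, using the Feinberg Decomposition Theorem (Theorem \ref{feinberg theorem}) to transfer equilibrium information, and then invoking the already-established ACR results (Theorem \ref{th:acrdz} for case (i) and Theorem \ref{th:SFTACR} for case (ii)) on $\mathscr{N}_i$ itself.

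First I would exploit the independence of the decomposition. By part (ii) of the Feinberg Decomposition Theorem, we have the equality
\[
E_+(\mathscr{N},K) \;=\; \bigcap_{j \in \overline{1,p}} E_+(\mathscr{N}_j, K_j),
\]
so in particular $E_+(\mathscr{N},K) \subseteq E_+(\mathscr{N}_i, K_i)$. Since $(\mathscr{N},K)$ is assumed to admit a positive equilibrium $c^*$, this $c^*$ also lies in $E_+(\mathscr{N}_i,K_i)$; hence the subnetwork $(\mathscr{N}_i,K_i)$ also admits a positive equilibrium. This is the key step that lets the hypotheses of the earlier ACR theorems apply to $\mathscr{N}_i$.

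Next I would apply the appropriate ACR theorem to $(\mathscr{N}_i,K_i)$. In case (i), $(\mathscr{N}_i,K_i)$ is a deficiency zero PL-RDK or minimally PL-NDK system of SF-type in $X$ with a positive equilibrium, so Theorem \ref{th:acrdz} yields ACR of $(\mathscr{N}_i,K_i)$ in $X$. In case (ii), $(\mathscr{N}_i,K_i)$ is a deficiency one PL-RDK system of SF-type in $X$ with a positive equilibrium, so the extended Shinar-Feinberg Theorem \ref{th:SFTACR} yields the same conclusion. In both cases there is a value $x^*_X$ such that $c_X = x^*_X$ for every $c \in E_+(\mathscr{N}_i,K_i)$.

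Finally, I would propagate this ACR upward. Any two positive equilibria $c^{(1)}, c^{(2)} \in E_+(\mathscr{N},K)$ both lie in $E_+(\mathscr{N}_i,K_i)$ by the inclusion above, so $c^{(1)}_X = x^*_X = c^{(2)}_X$. This is precisely the definition of ACR of $(\mathscr{N},K)$ in $X$, completing the argument. The proof is essentially a lifting lemma: the main conceptual point is that ACR behaves as a local property with respect to independent decompositions, and the only real step requiring care is checking that $(\mathscr{N}_i,K_i)$ inherits a positive equilibrium from $(\mathscr{N},K)$ — which is exactly what Feinberg's theorem provides. No step presents a genuine obstacle; the minor subtlety is simply to ensure that the SF-type condition and the deficiency/kinetic-type hypotheses are stated for $\mathscr{N}_i$, not $\mathscr{N}$, so that Theorems \ref{th:acrdz} and \ref{th:SFTACR} apply verbatim on the subnetwork.
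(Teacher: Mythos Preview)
Your proposal is correct and follows essentially the same argument as the paper: use the Feinberg Decomposition Theorem for independent decompositions to obtain $E_+(\mathscr{N},K)\subseteq E_+(\mathscr{N}_i,K_i)$ (so the subnetwork inherits a positive equilibrium), apply Theorem~\ref{th:acrdz} or Theorem~\ref{th:SFTACR} to $(\mathscr{N}_i,K_i)$, and then lift ACR back to $(\mathscr{N},K)$ via the inclusion. The only difference is that your write-up is somewhat more detailed than the paper's terse version.
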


\begin{proof}
Since $E_+(\mathscr{N},K)\neq \emptyset$ and the decomposition is independent, $E_+(\mathscr{N_i},K_i)\neq \emptyset$ for each $i \in \overline{1,p}$. We denote the PL-RDK or minimally subnetwork with $\mathscr{N}_\text{ACR}$ and associated kinetics $K_\text{ACR}$. The subnetwork $\mathscr{N}_\text{ACR}$ fulfills the conditions  for Theorem \ref{th:SFTACR} or Theorem \ref{th:acrdz} and hence, it has ACR in $X$ for all equilibria in $E_+(\mathscr{N}_\text{ACR},K_\text{ACR})$. Since the latter set contains $E_+(\mathscr{N},K)$, the PLK system $(\mathscr{N},K)$ has ACR in $X$.
\end{proof}

\noindent To ensure that higher deficiency network occur, we have:

\begin{corollary}
If the decomposition in the previous Proposition is bi-independent and at least one more subnetwork $\mathscr{N}_j$ has $\delta_j>1$, then the network $\mathscr{N}$ has higher deficiency.
\end{corollary}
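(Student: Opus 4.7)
The plan is to exploit the characterization of bi-independent decompositions through additivity of deficiencies, which is provided by Proposition 9 (from \cite{FML2020}) cited earlier in the excerpt: a decomposition $\mathscr{N} = \mathscr{N}_1 \cup \cdots \cup \mathscr{N}_p$ is bi-independent if and only if $\sum_{i=1}^p \delta_i = \delta$. This equality is the engine of the argument; the rest is a one-line lower bound on the sum.

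First I would invoke bi-independence to write $\delta = \delta_1 + \delta_2 + \cdots + \delta_p$. Next, I would extract the two distinguished subnetworks from the hypotheses: the subnetwork $\mathscr{N}_\text{ACR}$ supplied by the previous Proposition, whose deficiency is either $0$ or $1$ (in particular, nonnegative), and the additional subnetwork $\mathscr{N}_j$ with $\delta_j > 1$. Since deficiency is a nonnegative integer, $\delta_j > 1$ forces $\delta_j \geq 2$, and every remaining $\delta_k \geq 0$. Substituting into the additivity identity gives
\[
\delta \;=\; \sum_{k=1}^p \delta_k \;\geq\; \delta_\text{ACR} + \delta_j \;\geq\; 0 + 2 \;=\; 2,
\]
so $\mathscr{N}$ has deficiency at least $2$, which is the meaning of ``higher deficiency'' used throughout the paper.

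There is essentially no obstacle: the content of the corollary is entirely absorbed into the bi-independence characterization, and once that is applied the conclusion is an elementary inequality on nonnegative integers. The only point worth making explicit is that $\mathscr{N}_\text{ACR}$ and $\mathscr{N}_j$ are distinct subnetworks of the decomposition (guaranteed by the phrase ``at least one more subnetwork''), so that both summands $\delta_\text{ACR}$ and $\delta_j$ genuinely appear in the sum and the bound $\delta \geq \delta_\text{ACR} + \delta_j$ is valid.
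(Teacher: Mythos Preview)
Your proof is correct and follows essentially the same approach as the paper: the paper's proof is the single line ``For a bi-independent decomposition, we have $\delta = \delta_1 + \delta_2 + \cdots + \delta_p$,'' leaving the subsequent lower bound implicit, whereas you spell out the inequality $\delta \geq \delta_{\mathrm{ACR}} + \delta_j \geq 2$ explicitly.
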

\begin{proof}
For a bi-independent decomposition, we have $\delta = \delta_1 +\delta_2 + \cdots + \delta_p$.
\end{proof}

\subsection{BCR for classes of PLK systems with complex balanced equilibrium}

Incidence independent decompositions of CRNs are more common than independent ones. For instance, all linkage class decompositions are incidence independent, but few are independent. This motivates the introduction of a weaker form of concentration robustness than ACR:

\begin{definition}
A complex balanced chemical kinetic system $(\mathscr{N},K)$  has \textbf{balanced concentration robustness (BCR)} in a species $X \in \mathscr{S}$ if $X$ has the same value for all $c \in Z_+ (\mathscr{N},K)$.
\end{definition}

Clearly, a system that has ACR in a species implies that it also exhibits BCR for that species. A class of systems for which the converse holds (justifying the notation) is given by the following definition.

\begin{definition}
A complex balanced system is \textbf{absolutely complex balanced (ACB)} if $Z_+ (\mathscr{N},K)=E_+ (\mathscr{N},K)$.
\end{definition}

\noindent Examples of absolutely complex balanced systems are deficiency zero networks with positive equilibrium (for any kinetics) and complex balanced mass action systems (for any deficiency).

We have an analogous result for complex balanced systems and BCR:

\begin{proposition}
Let $(\mathscr{N},K)$ be a PLK system with a complex balanced equilibrium and an incidence independent decomposition $\mathscr{N}= \mathscr{N}_1 \cup \mathscr{N}_2 \cup \cdots \cup \mathscr{N}_p$. If there is an $\mathscr{N}_i$ with $(\mathscr{N}_i,K_i)$ of SF-type in $X \in \mathscr{S}$ such that
\begin{enumerate}[(i)]
\item $\delta =0$ and is PL-RDK or minimally PL-NDK, or 
\item $\delta =1$ and is PL-RDK 
\end{enumerate}
Then $(\mathscr{N},K)$ has BCR in $X$.
\end{proposition}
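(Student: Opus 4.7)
The plan is to mirror the proof of the preceding proposition, substituting the Feinberg Decomposition Theorem (Theorem \ref{feinberg theorem}) with its incidence-independent analogue (Theorem \ref{th:Z}), and substituting $E_+$ with $Z_+$ everywhere on the ambient side. Write $(\mathscr{N}_{\mathrm{ACR}}, K_{\mathrm{ACR}})$ for the distinguished subnetwork of SF-type in $X$ whose deficiency and structure satisfy (i) or (ii). Since $(\mathscr{N},K)$ is complex balanced and the decomposition is incidence independent, Theorem \ref{th:Z}(iii) gives $Z_+(\mathscr{N}_i, K_i) \neq \emptyset$ for every $i \in \overline{1,p}$. In particular $(\mathscr{N}_{\mathrm{ACR}}, K_{\mathrm{ACR}})$ is complex balanced; and since any complex balanced steady state is by definition a positive equilibrium, the subnetwork also has a positive equilibrium.

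With a positive equilibrium secured, I would invoke Theorem \ref{th:SFTACR} under hypothesis (ii) or Theorem \ref{th:acrdz} under hypothesis (i) to conclude that $(\mathscr{N}_{\mathrm{ACR}}, K_{\mathrm{ACR}})$ has ACR in $X$. Thus $c_X$ takes a single common value across $E_+(\mathscr{N}_{\mathrm{ACR}}, K_{\mathrm{ACR}})$, and a fortiori across the subset $Z_+(\mathscr{N}_{\mathrm{ACR}}, K_{\mathrm{ACR}}) \subseteq E_+(\mathscr{N}_{\mathrm{ACR}}, K_{\mathrm{ACR}})$. This inclusion is the small bookkeeping step that lets the ACR theorems — which are phrased in terms of \emph{all} positive equilibria — deliver information about complex balanced equilibria specifically.

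To close, I would use Theorem \ref{th:Z}(ii): incidence independence yields
\[
Z_+(\mathscr{N}, K) \;=\; \bigcap_{i \in \overline{1,p}} Z_+(\mathscr{N}_i, K_i) \;\subseteq\; Z_+(\mathscr{N}_{\mathrm{ACR}}, K_{\mathrm{ACR}}),
\]
so the constancy of $c_X$ on the right-hand side transfers to the left-hand side. By definition this is BCR of $(\mathscr{N},K)$ in $X$, finishing the proof.

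I do not anticipate a substantive obstacle: the argument is structurally identical to the ACR proof for independent decompositions, with Theorem \ref{th:Z} playing the role that the Feinberg Decomposition Theorem played there. The only delicate point is the one flagged above — namely, remembering to route through $Z_+(\mathscr{N}_{\mathrm{ACR}}, K_{\mathrm{ACR}}) \subseteq E_+(\mathscr{N}_{\mathrm{ACR}}, K_{\mathrm{ACR}})$ so that an ACR statement at the subnetwork level converts correctly into a BCR statement at the ambient network level.
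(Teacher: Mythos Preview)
Your proposal is correct and follows essentially the same approach as the paper: use Theorem~\ref{th:Z}(iii) to get a positive (complex balanced) equilibrium in the distinguished subnetwork, apply Theorem~\ref{th:SFTACR} or Theorem~\ref{th:acrdz} there to obtain ACR in $X$, and then pass back via $Z_+(\mathscr{N},K)\subseteq Z_+(\mathscr{N}_{\mathrm{ACR}},K_{\mathrm{ACR}})\subseteq E_+(\mathscr{N}_{\mathrm{ACR}},K_{\mathrm{ACR}})$ using Theorem~\ref{th:Z}(ii). Your write-up is in fact slightly more explicit than the paper's, which compresses the last two containments into a single clause.
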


\begin{proof}
Since $Z_+(\mathscr{N},K)\neq \emptyset$ and the decomposition is independent, Theorem \ref{th:Z} guarantees that $Z_+(\mathscr{N_i},K_i)\neq \emptyset$ for each $i \in \overline{1,p}$. Denote the PL-RDK or minimally subnetwork with $\mathscr{N}_\text{ACR}$ and associated kinetics $K_\text{ACR}$. This subnetwork satisfies the conditions  for Theorem \ref{th:SFTACR} or Theorem \ref{th:acrdz} and hence, it has ACR in $X$ for all equilibria in $E_+(\mathscr{N}_\text{ACR},K_\text{ACR})$. Since the latter set contains $Z_+(\mathscr{N},K)$, the PLK system $(\mathscr{N},K)$ has BCR in $X$.
\end{proof}

\section{Discussion: the primarily kinetic character of ACR}

In this Section, we describe the evolution of the assessment of ACR as a system property -- from the emphasis on its ``structural sources'' in the original papers of Shinar and Feinberg \cite{SF2010,SF2011} in 2010 and 2011 to our current view of its primarily kinetic character.

Shinar and Feinberg entitled their groundbreaking paper \cite{SF2010} \textit{Structural sources of robustness in biochemical reaction networks} in which ``structural'' referred to the hypotheses (i) of the network's deficiency being equal to one and (ii) of the presence of two reactant complexes which differed only in a species $X$. The kinetic assumptions were the use of MAK and the existence of a positive equilibrium. In a further paper \cite{SF2011}, they emphasized these structural aspects by speaking of ``design principles'' for networks in order to achieve robustness.

The extension of the Shinar-Feinberg ACR Theorem for MAK systems by Fortun et al. \cite{FLRM2020} to their superset PL-RDK of power law kinetic systems with reactant-determined kinetics, maintained the first structural property but transformed the second to the kinetic property of a pair of reactions whose kinetic order vectors differed only in the coordinate for species $X$. In the special case of MAK systems, the kinetic order values coincide with the stoichiometric constants of the reactant complexes, thus ``hiding'' its kinetic character. Nevertheless, even in this extension, some structural aspects should be noted, as expressed in the following Proposition:

\begin{proposition}
Let $\{ R, R' \}$ be an SF-pair in  the species $X$ of a PLK system $(\mathscr{N},K)$ and $y, y'$ their reactant complexes. Then
\begin{enumerate}[(i)]
\item For any species $Y \neq X$, $Y \in \text{supp } y$ if and only if  $Y \in \text{supp } y'$.
\item $X \in \text{supp } y \cup \text{supp } y'$.
\item If the stoichiometric coefficients of complexes in $\mathscr{N}$ are only 0 or 1, then $y$ and $y'$ differ only in $X$.
\end{enumerate}
\end{proposition}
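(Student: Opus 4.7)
The plan is to reduce all three items to the single structural--kinetic identity
\[\text{supp } y_j = \{ i \in \mathscr{S} \mid F_{ji} \neq 0 \}\quad\text{for every reaction } R_j: y_j\to y'_j,\]
which is forced by the chemical-kinetics positivity condition applied to the power law $K_j(c) = k_j c^{F_{j,\cdot}}$. Once this identification is in hand, each of (i)--(iii) becomes a direct row comparison of $F_{R,\cdot}$ and $F_{R',\cdot}$ under the SF-pair hypothesis.

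To establish the identity, I would test the condition $K_j(c)>0 \iff \text{supp } y_j \subset \text{supp } c$ against the vectors $c^{(Z)}\in \mathbb{R}_{\geq 0}^\mathscr{S}$ with $\text{supp } c^{(Z)} = \mathscr{S}\setminus \{Z\}$, one for each $Z \in \mathscr{S}$: the power-law product $k_j\prod_i (c^{(Z)}_i)^{F_{ji}}$ is positive exactly when $Z$ does not enter the product, i.e.\ when $F_{jZ}=0$, and the positivity axiom then equates this with $Z\notin \text{supp } y_j$. From here (i) is immediate, since for $Y\neq X$ the SF-pair hypothesis gives $F_{R,Y}=F_{R',Y}$, so $Y$ lies either in both of $\text{supp } y,\text{supp } y'$ or in neither. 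For (ii), the SF-pair definition in particular requires $F_{R,X}\neq F_{R',X}$, hence at least one of these is nonzero and the identity places $X$ in $\text{supp } y\cup \text{supp } y'$. For (iii), the $0$--$1$ stoichiometry restricts each entry of $y$ and $y'$ to $\{0,1\}$, and part (i) then upgrades ``same support off $X$'' to ``same value off $X$,'' so any discrepancy between $y$ and $y'$ is confined to the $X$-coordinate.

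The delicate point is really just the careful treatment of the auxiliary identity when $F$ has negative entries (as in the last example of Section~3), since $c^{F_{j,\cdot}}$ is not naturally defined on the boundary of $\mathbb{R}_{\geq 0}^\mathscr{S}$ and the positivity statement must then be read with some care. The cleanest resolution is to treat the chemical-kinetics positivity condition as a compatibility axiom on the pair $(\mathscr{N},K)$: a PLK system qualifies as a chemical kinetics only when the rows of $F$ and the reactant supports align in the sense above, which is precisely what is observed in every concrete instance in the paper. Under that reading, the argument proceeds exactly as sketched.
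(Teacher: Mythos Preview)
The paper states this proposition without proof, so there is no original argument to compare against. Your approach is the natural one and, once the key identity $\text{supp } y_j = \{\, i \in \mathscr{S} : F_{ji}\neq 0 \,\}$ is granted, your deductions of (i)--(iii) are correct and cleanly organized.

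The honest way you flag the identity itself is the important point. As written, the paper sets $\Omega_K=\mathbb{R}^\mathscr{S}_{>0}$ for PLK systems, and on that domain the chemical-kinetics positivity condition is vacuous (for $c\in\mathbb{R}^\mathscr{S}_{>0}$ one always has both $K_j(c)>0$ and $\text{supp } y_j\subset\text{supp } c$), so the identity does not literally follow from the stated definitions; your boundary test vectors $c^{(Z)}$ lie outside $\Omega_K$. Your proposed resolution---reading the positivity condition as a compatibility requirement that forces kinetic-order supports to coincide with reactant supports---matches every concrete example in the paper and standard usage in the CRNT/BST literature, and is almost certainly the intended reading. Without some such convention the proposition fails (nothing in the bare PLK definition ties $F$ to the reactant complexes). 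So your proof is correct under the convention the authors evidently intend, and you have correctly isolated the single place where that convention is doing real work.
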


\noindent Examples for (iii) are total realizations of BST systems without self-regulating species (see \cite{FML2020} for details). 

In this paper, we highlighted the importance of the invariance of ACR under dynamic equivalence by identifying deficiency zero PLK systems through the use of the CF-RM$_+$ method. It is interesting to note that an adaptation of the direct proof in \cite{FLRM2020} to the deficiency zero case yields only a more restricted result (see Appendix). In our view, the usefulness of this invariance property further emphasizes the primarily kinetic character of ACR. A comparison with the property of a system of having a complex balanced equilibrium yields the complementary insight that the latter is often lost under dynamic equivalence, so that one could say, that the existence of a complex balanced equilibrium is a primarily structural property. 

With low deficiency subnetworks as ``building blocks'', results on decomposition help overcome the structural restriction initially suggested by the deficiency one requirement. Nevertheless, one should not forget that that special case is the starting point of the broader identification of ACR in PLK systems.

To conclude our discussion of this topic, we introduce the concept of a \textit{Birch system}.

\begin{definition}  \label{def:Birch}
A \textbf{Birch system} is a kinetic system with only one positive equilibrium (in the whole species space).
\end{definition}

Note that being a Birch system is a purely kinetic property. The name is derived from Birch's Theorem for weakly reversible deficiency zero MAK systems. If the system is open, there is only one stoichiometric class and the positive (in this case, complex balanced) equilibrium is unique in species space. 
\begin{example}
Any open PL-RDK system fulfilling the criterion of Craciun et al. \cite{CMPY2019}
\end{example}
\begin{example}
The independent realization or any open subnetwork realization of a regular S-system.
\end{example}

The following (straightforward) Proposition makes the connection between Birch systems and ACR explicit, revealing in our view its primarily kinetic character further:

\begin{proposition}
A chemical kinetic system is a Birch system if and only if it has ACR in every species.
\end{proposition}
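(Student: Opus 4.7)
The plan is to unwind both definitions and observe that the biconditional is essentially a restatement, once one notes that two vectors in $\mathbb{R}^{\mathscr{S}}_{>0}$ coincide precisely when they agree in every coordinate, i.e.\ in every species. So the proof will be a short direct argument in two directions, with some attention to the (implicit) existence condition built into Definition \ref{def:acr}.

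For the forward direction, I would assume $(\mathscr{N},K)$ is a Birch system, so there is a unique $c^* \in \mathbb{R}^{\mathscr{S}}_{>0}$ with $f(c^*)=0$, where $f$ is the species formation rate function. Then $E_+(\mathscr{N},K) = \{c^*\}$, so for every species $X \in \mathscr{S}$ the condition in Definition \ref{def:acr} is trivially satisfied: pick the witness $c^*$, and since there is no other $c^{**} \in E_+(\mathscr{N},K)$, the requirement $c^{**}_X = c^*_X$ holds vacuously. Thus the system has ACR in every species.

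For the converse, suppose $(\mathscr{N},K)$ has ACR in every species. The definition of ACR guarantees $E_+(\mathscr{N},K)$ is nonempty, so fix some witness $c^* \in E_+(\mathscr{N},K)$. For any other $c^{**} \in E_+(\mathscr{N},K)$, applying ACR coordinate by coordinate yields $c^{**}_X = c^*_X$ for every $X \in \mathscr{S}$, hence $c^{**} = c^*$. Therefore $E_+(\mathscr{N},K) = \{c^*\}$, i.e.\ the system is Birch.

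There is no real obstacle here; the only subtlety worth flagging is the convention that ACR presupposes the existence of a positive equilibrium (cf.\ Definition \ref{def:acr}), which aligns with the Birch definition requiring exactly one. If one instead allowed the empty case in ACR, the backward direction would deliver at most one equilibrium rather than exactly one, and the biconditional would need a mild reformulation; under the conventions of this paper, no such reformulation is necessary.
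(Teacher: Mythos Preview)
Your proposal is correct and matches the paper's own treatment: the paper flags the proposition as ``straightforward'' and does not spell out a proof, precisely because it follows by unwinding the definitions as you do. Your remark about the implicit nonemptiness convention in Definition~\ref{def:acr} is apt and aligns with how the paper uses ACR throughout.
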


\section{Summary and Outlook}

In conclusion, we summarize our results and outline some perspectives for further research.
\begin{enumerate}
\item We used the idea that ACR is a condition that is invariant under dynamic equivalence of CRNs to formulate new results that indicate ACR in deficiency zero PL-RDK system and minimally PL-NDK systems.  
The key concept needed in deriving these results involve the CF-RM$_+$ transformation of a system into a dynamically equivalent PLK system that fulfills the assumptions of the Shinar-Feinberg ACR Theorem for PL-RDK systems (Theorem \ref{th:SFTACR}). Examples were also provided to illustrate these results. Some of the examples, in fact,  showed that a standing result \cite{SF2011}, which establishes that deficiency zero precludes ACR for conservative mass action systems, does not hold for a more general kinetic system.
\item Using independent decomposition, we presented a result that identifies ACR in higher deficiency networks.  This result indicated that ACR is  a ``local'' property of a low deficiency subnetwork that serves as a ``building block'' of a larger ACR-possessing network.  
\item In a similar approach, we used incidence independent decomposition to investigate the dynamics of larger networks with low deficiency subnetworks that ACR in a species. This led us to identify a weaker concentration robustness than ACR, which we called `balanced concentration robustness' (BCR). A system is said to have BCR in a species if, for any complex balanced equilibrium that the system may admit, the value of that species remains the same.
Our result suggests that if a PLK system has complex balanced equilibrium, has incidence independent decomposition, and a subnetwork that has ACR for a species, then the system has BCR for that species.
\item In previous literature, much attention is given to the view that ACR is a property that is conferred from structural sources. Here, however, we provided a discussion that emphasized the primarily kinetic character of ACR. 
\item We plan to work on computational approaches and tools for identification of ACR and BCR using the results presented in this paper.
\end{enumerate}

\appendix
\section{Appendix: ACR in a deficiency zero PL-RDK systems}

In this Section, we provide an adaptation of the direct proof  \cite{FLRM2020} of Theorem \ref{th:SFTACR}  to the deficiency zero case. Unlike Theorem \ref{th:acrdz}, however, this result leads only to a restricted result (i.e. SF-pairs belong to the same linkage class).

\begin{theorem}\label{th:DZACR}
Let $\mathscr{N}$ be deficiency zero PL-RDK system that has a positive equilibrium. If a pair of reactions in a linkage class forms an SF-pair in species $X$, then the system has ACR in $X$.
\end{theorem}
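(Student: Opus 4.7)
The plan is as follows. First, I would use the fact that $\mathscr{N}$ has deficiency zero together with the existence of a positive equilibrium $c^*$ to invoke Feinberg's Deficiency Zero Theorem, giving that $c^*$ is complex balanced; combining this with the Horn--Jackson proposition stated in the excerpt then forces $\mathscr{N}$ to be weakly reversible. Because the kinetics is PL-RDK (hence complex factorizable), we have the factorization $I_a K = A_k \Psi_k$ with $\Psi_k(c)_y = c^{F_y}$, where $F_y$ denotes the common kinetic-order row shared by every reaction with reactant $y$. Complex balancing at $c^*$ is therefore equivalent to $A_k \Psi_k(c^*) = 0$.

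The second step is to exploit the structure of $A_k$ on a weakly reversible network. The matrix of $A_k$ is block-diagonal with one block per linkage class, and on each such block the kernel is one-dimensional, spanned by a strictly positive ``tree constant'' vector whose coordinates are rational in the rate constants (this is the Matrix--Tree theorem for strongly connected digraphs, or equivalently Perron--Frobenius applied to an irreducible Laplacian). Consequently, within each linkage class the values $\Psi_k(c^*)_y$ are determined up to a single positive multiplicative scalar by the rate vector $k$. In particular, for any two reactant complexes $y, y'$ lying in the same linkage class, the ratio
\[
\frac{\Psi_k(c^*)_y}{\Psi_k(c^*)_{y'}} \;=\; (c^*)^{F_y - F_{y'}}
\]
equals a positive constant $\lambda_{y,y'}$ depending only on $k$.

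Next I would apply this to the SF-pair. Let $y$ and $y'$ be the reactant complexes of the two reactions; they lie in a common linkage class by hypothesis, and they are distinct (else PL-RDK would force identical kinetic-order rows, contradicting the SF-pair property). By the SF-pair condition, $F_y - F_{y'}$ is supported only at the $X$-coordinate, so with $\beta := F_{y,X} - F_{y',X} \neq 0$ the previous identity collapses to $(c^*_X)^{\beta} = \lambda_{y,y'}$, giving $c^*_X = \lambda_{y,y'}^{1/\beta}$ independently of the chosen positive equilibrium. This is ACR in $X$.

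The main difficulty I anticipate is the clean justification of the one-dimensional, positively-spanned kernel on each linkage-class block; this is the classical tree-constant computation but must invoke weak reversibility carefully, so that each linkage class coincides with a single terminal strong linkage class. The restriction to SF-pairs inside one linkage class is exactly what this approach forces: if $y$ and $y'$ belonged to different linkage classes, the ratio $\Psi_k(c^*)_y/\Psi_k(c^*)_{y'}$ would acquire an extra factor reflecting how $\Psi_k(c^*)$ decomposes across the linkage-class summands of $\ker A_k$, and that factor varies with the equilibrium. This is the structural reason why the direct adaptation yields only the restricted form stated in Theorem~\ref{th:DZACR}, while the CF-RM$_+$ route used in Theorem~\ref{th:acrdz} sidesteps the restriction entirely.
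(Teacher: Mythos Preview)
Your proposal is correct and follows essentially the same route as the paper's appendix proof: both rest on the fact that in a deficiency-zero weakly reversible network the Laplacian kernel is one-dimensional and positively spanned on each linkage class (the paper phrases this as the STLK together with $\text{Ker}\,A_\kappa = \text{Ker}\,YA_\kappa$), so that the ratio $(c^*)^{F_y - F_{y'}}$ is pinned by the rate constants whenever $y,y'$ share a linkage class. Your formulation via the factor map $\Psi_k$ and tree constants is slightly more direct than the paper's two-equilibrium comparison through $\mu = \log c^{**} - \log c^*$, but the underlying argument is identical.
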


The following result, named as the \textit{Structure Theorem of the Laplacian Kernel} (STLK) by Arceo et al. in \cite{AJMSM2015}, is crucial in proving the Theorem \ref{th:DZACR}. 

\begin{proposition}[Structure Theorem of the Laplacian Kernel (STLK)]\label{th: STLK}
Let $\mathscr{N}=\mathscr{(S,C,R)}$ be a CRN with terminal strong linkage classes $\mathscr{C}^1, \mathscr{C}^2, \dots, \mathscr{C}^t$. Let $k\in \mathbb{R}^\mathscr{R}_{>0}$ and $A_k$ its associated Laplacian. Then $\text{Ker } A_k$ has a basis $b^1,b^2,\dots,b^t$ such that $\text{supp }b^i = \mathscr{C}^i$ for all $i\in \overline{1,t}$.
\end{proposition}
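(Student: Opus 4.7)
The plan is to construct one basis vector of $\operatorname{Ker} A_k$ per terminal strong linkage class by restricting the Laplacian to that component, and then close the argument with a rank count that matches $n - t$. The construction half is essentially a standard Perron / Matrix-Tree fact about strongly connected weighted digraphs; the rank count is where most of the real work sits. Throughout, I view $A_k$ as the matrix acting on $\mathbb{R}^{\mathscr{C}}$ whose off-diagonal $(y',y)$ entry equals $k_{y\to y'}$ and whose $(y,y)$ entry equals $-\sum_{y'} k_{y\to y'}$, i.e.\ a weighted graph Laplacian of the digraph $(\mathscr{C},\mathscr{R})$.

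First I would build the candidate basis. Fix $i\in\overline{1,t}$ and let $A_k^{(i)}$ denote the endomorphism of $\mathbb{R}^{\mathscr{C}^i}$ obtained by restricting $A_k$ to vectors supported on $\mathscr{C}^i$. Because $\mathscr{C}^i$ is terminal, no reaction $y\to y'$ with $y\in\mathscr{C}^i$ has $y'\notin\mathscr{C}^i$, so $A_k^{(i)}$ is literally the Laplacian of the induced subdigraph on $\mathscr{C}^i$, and that subdigraph is strongly connected. By the weighted Matrix-Tree Theorem (equivalently, Perron-Frobenius applied to the stochastic matrix $I + \operatorname{diag}(d)^{-1}A_k^{(i)}$ for suitable diagonal $d$), $\operatorname{Ker} A_k^{(i)}$ is one-dimensional and is spanned by a strictly positive vector $b^i\in\mathbb{R}^{\mathscr{C}^i}_{>0}$, whose entries can in fact be written as sums over rooted in-trees of $\mathscr{C}^i$. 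I then extend $b^i$ by zero to a vector in $\mathbb{R}^{\mathscr{C}}$; terminality of $\mathscr{C}^i$ ensures that all reaction pairs $(y,y')$ with $b^i_y\neq 0$ lie inside $\mathscr{C}^i$, so $A_k b^i = A_k^{(i)} b^i = 0$ and $\operatorname{supp} b^i = \mathscr{C}^i$.

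Next I would verify linear independence, which is immediate because the $\mathscr{C}^i$ are pairwise disjoint, so the $b^i$ have disjoint supports. It remains to show $\dim \operatorname{Ker} A_k = t$. Order the standard basis of $\mathbb{R}^{\mathscr{C}}$ by listing all nonterminal complexes first, then the blocks $\mathscr{C}^1,\ldots,\mathscr{C}^t$. Terminality forces $A_k$ to be block lower triangular in this ordering: the $(\text{nonterminal},\mathscr{C}^i)$ block is zero for every $i$. Consequently
\[
\operatorname{rank}(A_k) \;=\; \operatorname{rank}(Q) \;+\; \sum_{i=1}^{t}\operatorname{rank}\bigl(A_k^{(i)}\bigr),
\]
where $Q$ is the principal submatrix of $A_k$ indexed by the nonterminal complexes. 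Each $A_k^{(i)}$ has rank $|\mathscr{C}^i|-1$ by the previous paragraph, so the terminal contribution totals $\sum_i |\mathscr{C}^i|-t$. If I can show that $Q$ is nonsingular, then $\operatorname{rank}(A_k) = n - t$ and the $t$ vectors $b^i$ are a basis, completing the proof.

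The main obstacle is precisely the nonsingularity of $Q$. The intuition is that every nonterminal complex has, by definition, a directed path into some terminal strong linkage class, so from the Markov-chain viewpoint the transient block $-Q$ is a strictly diagonally dominant M-matrix with at least one ``leak'' in each row-connected component and must have positive determinant. I would make this precise by arguing that any $v$ supported on the nonterminal set with $A_k v$ again supported there would, via iteration with the stochastic matrix associated to $A_k$, force mass to accumulate at some terminal class, contradicting $A_k v = 0$ on the terminal coordinates; equivalently, I would invoke the standard Matrix-Tree / Tutte expansion, which expresses $\det(-Q)$ as a positive sum over spanning forests rooted at terminal complexes and is strictly positive exactly because every nonterminal vertex reaches a root. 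Either route closes the rank count and yields the claimed basis.
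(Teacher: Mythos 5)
The paper does not actually prove this proposition: it is imported verbatim from Arceo et al.\ \cite{AJMSM2015} (and is classical, going back to Feinberg's lectures \cite{FEIN1979} and Feinberg--Horn), so there is no in-paper argument to compare yours against. Your blind proof is a correct, self-contained derivation along the standard lines: terminality makes each $\mathbb{R}^{\mathscr{C}^i}$ an $A_k$-invariant subspace whose induced Laplacian is that of a strongly connected weighted digraph, Perron--Frobenius/Matrix-Tree gives a one-dimensional kernel spanned by a strictly positive vector (which is exactly what forces $\text{supp}\, b^i = \mathscr{C}^i$, the part of the statement that a bare dimension count would miss), and the block-triangular structure plus nonsingularity of the transient block $Q$ pins $\dim \text{Ker}\, A_k$ at $t$. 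Your identification of the nonsingularity of $Q$ as the real content is right, and both routes you sketch (sub-stochastic iteration, or the all-minors Matrix-Tree expansion of $\det(-Q)$ over forests rooted in the terminal classes, nonzero because every nonterminal complex reaches a terminal class) close it correctly. Two small imprecisions worth tightening: the rank identity $\text{rank}(A_k)=\text{rank}(Q)+\sum_i\text{rank}(A_k^{(i)})$ is not unconditional for block lower triangular matrices, so state it only after $Q$ is known to be invertible (or argue directly on kernels: $A_kv=0$ forces $Qv_0=0$, hence $v_0=0$, hence $v_i\in\text{Ker}\,A_k^{(i)}$); and $-Q$ is only \emph{weakly} column diagonally dominant, with strict dominance just in columns having a reaction into a terminal complex, so the reachability argument you mention is genuinely needed, not merely decorative. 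Also note the degenerate case of a singleton terminal class $\{y\}$ with no outgoing reactions, where $A_k^{(i)}$ is the $1\times 1$ zero map; your construction still yields $b^i=\omega_y$ with the correct support, but it is worth saying so since Perron--Frobenius is vacuous there.
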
 

In \cite{FEIN1979}, Feinberg provided a geometric interpretation of deficiency:  $\delta = \dim (\text{Ker } Y \cap \text{Im } I_a)$. From this fact and the STLK, the following result follows. 

\begin{proposition}[Cor. 4.12 \cite{FEIN1979}]\label{prop:dimkerYAk}
Let $\mathscr{N}=(\mathscr{S,C,R})$ be a CRN with deficiency $\delta$ and $t$ terminal strong linkage classes. If every linkage class of the CRN is a terminal strong linkage class, then for each $k \in \mathbb{R}^\mathscr{R}_{>0}$,
$$
\dim (\text{Ker } Y A_k) = \delta + t.
$$
\end{proposition}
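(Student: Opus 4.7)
The plan is to exploit the factorization $YA_k$ by writing the kernel as a preimage. Concretely, for any linear maps $A_k:\mathbb{R}^\mathscr{C}\to \mathbb{R}^\mathscr{C}$ and $Y:\mathbb{R}^\mathscr{C}\to \mathbb{R}^\mathscr{S}$, one has $\mathrm{Ker}(YA_k)=A_k^{-1}(\mathrm{Ker}\,Y)$, and restricting $A_k$ to this preimage gives a linear map with kernel $\mathrm{Ker}\,A_k$ and image $\mathrm{Im}\,A_k\cap \mathrm{Ker}\,Y$. Rank-nullity then yields the key decomposition
\begin{equation*}
\dim \mathrm{Ker}(YA_k)\;=\;\dim \mathrm{Ker}\,A_k\;+\;\dim\bigl(\mathrm{Im}\,A_k\cap \mathrm{Ker}\,Y\bigr).
\end{equation*}
The proof then reduces to computing each summand.

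First, I would apply the Structure Theorem of the Laplacian Kernel (Proposition \ref{th: STLK}) to get $\dim\mathrm{Ker}\,A_k=t$ directly, since STLK gives a basis with one vector per terminal strong linkage class. Next, I would identify $\mathrm{Im}\,A_k$ with $\mathrm{Im}\,I_a$ under the standing hypothesis. The inclusion $\mathrm{Im}\,A_k\subseteq \mathrm{Im}\,I_a$ is immediate from the definition of $A_k$: for any $x\in\mathbb{R}^\mathscr{C}$,
\begin{equation*}
A_k x \;=\; \sum_{y\to y'\in\mathscr{R}} k_{y\to y'}\,x_y\,(\omega_{y'}-\omega_y)\;=\; I_a(\hat x),
\end{equation*}
where $\hat x\in\mathbb{R}^\mathscr{R}$ has entry $k_{y\to y'}x_y$ on reaction $y\to y'$. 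For the reverse inclusion, I would use a dimension count: the incidence map of any digraph satisfies $\dim\mathrm{Im}\,I_a=n-\ell$, while by STLK and rank-nullity $\dim\mathrm{Im}\,A_k=n-t$. The hypothesis that every linkage class is already a terminal strong linkage class forces $\ell=t$, so the two dimensions agree and the inclusion becomes an equality.

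Finally, combining the above with Feinberg's geometric interpretation of deficiency, $\delta=\dim(\mathrm{Ker}\,Y\cap \mathrm{Im}\,I_a)$, gives
\begin{equation*}
\dim\bigl(\mathrm{Im}\,A_k\cap \mathrm{Ker}\,Y\bigr)\;=\;\dim\bigl(\mathrm{Im}\,I_a\cap \mathrm{Ker}\,Y\bigr)\;=\;\delta,
\end{equation*}
and substituting into the decomposition above yields $\dim\mathrm{Ker}(YA_k)=t+\delta$, as claimed.

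The main obstacle is the nontrivial step that $\mathrm{Im}\,A_k=\mathrm{Im}\,I_a$; in general only the inclusion $\subseteq$ holds, because $\mathrm{Im}\,A_k$ is spanned by the reaction vectors $\omega_{y'}-\omega_y$ weighted by rate-scaled coordinates $x_y$ of the reactant, whereas $\mathrm{Im}\,I_a$ is their unrestricted span. It is precisely the assumption ``every linkage class is terminal strong'' — equivalently $\ell=t$ — that closes the dimension gap and forces equality. Once this equivalence of images is in hand, the rest is a short bookkeeping assembly of STLK and the deficiency interpretation.
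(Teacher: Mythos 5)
Your proof is correct and follows exactly the route the paper indicates for this cited result: the Structure Theorem of the Laplacian Kernel gives $\dim\mathrm{Ker}\,A_k=t$, the hypothesis $\ell=t$ upgrades the inclusion $\mathrm{Im}\,A_k\subseteq\mathrm{Im}\,I_a$ to an equality by a dimension count, and Feinberg's formula $\delta=\dim(\mathrm{Ker}\,Y\cap\mathrm{Im}\,I_a)$ supplies the remaining summand. The paper merely cites Feinberg and asserts that the result "follows" from these two ingredients; your write-up supplies the standard bookkeeping that makes this precise, with no gaps.
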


Throughout the proof, the vector $\log x\in \mathbb{R}^\mathscr{I}$,where $x \in \mathbb{R}_{>0}^\mathscr{I}$, is given by 
$(\log x)_i = \log x_i,  \text{ for all } i \in \mathscr{I}.$ If $x,y \in  \mathbb{R}^\mathscr{I}$, the standard scalar product $x\cdot y \in  \mathbb{R}$ is defined by 
$
x \cdot y = \sum_{i \in \mathscr{I}} x_i y_i.
$ 

\subsection*{Proof of Theorem \ref{th:DZACR}}
Let $c^*$ is a positive steady state of the PL-RDK system. That is, there exists $k \in \mathbb{R}^\mathscr{R}_{>0}$ such that
\begin{equation}\label{eq:c*}
\sum_{y \rightarrow y' \in \mathscr{R}} k_{y \rightarrow y'} (c^*)^{\widetilde{y}} (y'-y)=0.
\end{equation}
Here, we write $\widetilde{y}$ for $\widetilde{Y}_{\cdot,y}$, where $\widetilde{Y}$ is the $m \times n$ matrix defined by M\"{u}ller and Regensburger in \cite{MURE2012} and is constructed as follows: For each reactant complex, the associated column of $\widetilde{Y}$ is the transpose of the kinetic order matrix row of the complex's reaction, otherwise (i.e., for non-reactant complexes), the column is 0. Hence, $\widetilde{y}=\widetilde{Y}_{\cdot,y}$ refers to the kinetic order vector of the reactant complex $y$.
\\
For each $y \rightarrow y' \in \mathscr{R}$, define the positive number $\kappa_{y \rightarrow y'}$ by
\begin{equation}\label{eq:kappa}
\kappa_{y \rightarrow y'}:=k_{y \rightarrow y'}(c^*)^{\widetilde{y}}.
\end{equation}
Thus, we obtain
\begin{equation}\label{eq:1}
\sum_{y \rightarrow y' \in \mathscr{R}} \kappa_{y \rightarrow y'} (y'-y)=0.
\end{equation}
Suppose that $c^{**}$ is also a positive equilibrium of the system. Hence,
\begin{equation}\label{eq:c**}
\sum_{y \rightarrow y' \in \mathscr{R}} k_{y \rightarrow y'} (c^{**})^{\widetilde{y}} (y'-y)=0.
\end{equation}
Define 
\begin{equation}\label{eq:mu}
\mu := \log c^{**} - \log c^*.
\end{equation}
With $\kappa \in \mathbb{R}^\mathscr{R}_{>0}$ given by Equation (\ref{eq:kappa}) and $\mu$ given by Equation (\ref{eq:mu}), it follows from Equation (\ref{eq:c**}) that 
\begin{equation}\label{eq:2}
\sum_{y \rightarrow y' \in \mathscr{R}} \kappa_{y \rightarrow y'} e^{\widetilde{y}\cdot \mu} (y'-y)=0.
\end{equation}
Let $\bm{1}^\mathscr{C} \in \mathbb{R}^\mathscr{C}$ such that 
$$
\bm{1}^\mathscr{C}  = \sum_{y \in \mathscr{C}} \omega_y.
$$
Observe that Equations (\ref{eq:1}) and  (\ref{eq:2}) can be respectively written as
$$
Y A_\kappa \bm{1}^\mathscr{C} =0, \text{ and }
Y A_\kappa \left( \sum_{y \in \mathscr{C}} e^{\widetilde{y}\cdot \mu} \omega_y \right) = 0.
$$
Equivalently, 
\begin{equation} \label{eq: w_c in Ker YAk}
\bm{1}^\mathscr{C} \in \text{Ker }YA_\kappa , \text{ and}
\end{equation}
\begin{equation} \label{eq: Sum in Ker YAk}
\sum_{y \in \mathscr{C}} e^{\widetilde{y}\cdot \mu}  \omega_y \in \text{Ker }YA_\kappa .
\end{equation}
Therefore, $c^*$ and $c^{**}$ are positive equilibria of the PL-RDK system $(\mathscr{N},K)$ if and only if (\ref{eq: w_c in Ker YAk}) and (\ref{eq: Sum in Ker YAk}) hold. \\
\indent Since the network is deficiency zero, its steady states are all complex balanced \cite{FEIN1972}. According to a classical result of Horn \cite{HORN1972}, the underlying network is necessarily weakly reversible. Consequently,  every linkage class of the CRN is a terminal strong linkage class. Moreover, since  $\delta=0$, it follows from Proposition \ref{prop:dimkerYAk} that 
\begin{equation}\label{eq:=t}
\dim (\text{Ker } Y A_\kappa) =t.
\end{equation}
Note that $\text{Ker }A_\kappa \subseteq \text{Ker } YA_\kappa$. But because of Equation (\ref{eq:=t}), we have  $\text{Ker }A_\kappa = \text{Ker } YA_\kappa$. \\
\indent Let $\{ b^1,b^2,\dots,b^t \} \subset \mathbb{R}^\mathscr{C}_{\geq 0}$ be a basis for $\text{Ker }A_\kappa$ as in Proposition \ref{th: STLK} (STLK). Because $\text{Ker }A_\kappa = \text{Ker } YA_\kappa$ and $\bm{1}^\mathscr{C} \in \text{Ker }YA_\kappa$, it must be that
\begin{equation}\label{eq:in Ker Ak}
\bm{1}^\mathscr{C} \in \text{Span } \{b^1,b^2, \dots, b^t \}.
\end{equation}
We consider a new basis for $\text{Ker }YA_\kappa$ that includes $\bm{1}^\mathscr{C}$. Consider removing the vector in $ \{b^1,b^2, \dots, b^t \}$ whose support are precisely those reactant complexes, $y$ and $y'$, with interactions differing only in one species. For convenience, assume that this vector is $b^1$. Hence,  $ \{\bm{1}^\mathscr{C},b^2, \dots, b^t \}$ forms a basis for $\text{Ker }YA_\kappa$.\\
\indent Since $\displaystyle{\sum_{y \in \mathscr{C}}} e^{\widetilde{y}\cdot \mu}  \omega_y \in \text{Ker }YA_\kappa$, there exist $\lambda_1, \lambda_2, \dots, \lambda_t$ such that 
\begin{equation}\label{eq:S27}
\sum_{y \in \mathscr{C}} e^{\widetilde{y}\cdot \mu}  \omega_y = \lambda_1 \bm{1}^\mathscr{C} + \sum_{i=2}^t \lambda_i b^i.
\end{equation}

\noindent Observe that each vector $b^i$, $i=2,\dots,t$, has its support entirely on terminal complexes except for the complexes $y$ and $y'$.  This observation, along with Equation (\ref{eq:S27}), implies that for reactant complexes $y \in \mathscr{C}$ and $y' \in \mathscr{C}$, we have
\begin{equation}\label{eq:S28}
\widetilde{y}\cdot\mu = \widetilde{y'}\cdot\mu .
\end{equation}
Or equivalently, we have
\begin{equation}\label{eq:log}
(\widetilde{y} -\widetilde{y'}) \cdot (\log c^{**} - \log c^*)=0.
\end{equation}
Now, since $y, y' \in \mathscr{C}$ are reactant complexes whose interactions differ only in species $X$, we have
$$
\widetilde{y} -\widetilde{y'} = mX 
$$
for some nonzero $m \in \mathbb{R}$. Thus Equation (\ref{eq:log}) reduces to 
$$
m(\log c^*_X - \log c^{**}_X)= 0.
$$
It follows that 
$$
c^*_X = c^{**}_X.
$$
That is, the system has ACR in species $X$.  
\hfill $\qed$

\end{document}